\numberwithin{equation}{section}
\newtheorem{theorem}{Theorem}[section]
\newtheorem{lemma}{Lemma}[section]
\newtheorem{corollary}{Corollary}[section]
\theoremstyle{definition}
\newtheorem{problem}{Problem}
\newcommand{\ud}{\mathrm{d}}
\newcommand{\RN}{\mathbb R^N}
\newcommand{\g}{\gamma}
\newcommand{\la}{\lambda}
\newcommand{\lb}{\Lambda}
\newcommand{\R}{\mathbb R}
\newcommand{\al}{\alpha}
\newcommand{\nor}[1]{\|#1\|}
\newcommand{\lan}{\langle}
\newcommand{\ran}{\rangle}
\newcommand{\e}{\varepsilon}
\newcommand{\vp}{\varphi}
\newcommand{\bc}{\begin{corollary}}
\newcommand{\ec}{\end{corollary}}
\newcommand{\bcs}{\begin{cases}}
\newcommand{\ecs}{\end{cases}}
\newcommand{\abs}[1]{\lvert#1\rvert}
\newcommand{\bigabs}[1]{\bigl\lvert#1\bigr\rvert}
\title[Groundstates for critical Choquard type equation]{Groundstates for Choquard type equations with Hardy--Littlewood--Sobolev lower critical exponent}
\author[D.\ Cassani]{Daniele Cassani}
\author[J.\ Van Schaftingen]{Jean Van Schaftingen}
\author[J.J.\ Zhang]{Jianjun Zhang}
\address[D.\ Cassani]{\newline\indent Dip. di Scienza e Alta Tecnologia
\newline\indent
Universit\`{a} degli Studi dell'Insubria
\newline\indent
and
\newline\indent
RISM - Riemann International School of Mathematics
\newline\indent
via G.B. Vico 46, 21100 Varese, Italy}
\email{\href{mailto:Daniele.Cassani@uninsubria.it}{Daniele.Cassani@uninsubria.it}}
\address[J. Van Schaftingen]{%
\newline\indent Institut de Recherche en Math\'{e}matique et Physique
\newline\indent Universit\'{e} Catholique de Louvain
\newline\indent Chemin du Cyclotron 2 bte L7.01.01
\newline\indent 1348 Louvain-la-Neuve, Belgium}
\email{\href{mailto:Jean.VanSchaftingen@UCLouvain.be}{Jean.VanSchaftingen@UCLouvain.be}}
\address[J. J.\ Zhang]{\newline\indent College of Mathematics and Statistics
\newline\indent
Chongqing Jiaotong University
\newline\indent
Chongqing 400074, PR China
\newline\indent and
\newline\indent Dip. di Scienza e Alta Tecnologia
\newline\indent
Universit\`{a} degli Studi dell'Insubria
\newline\indent
via G.B. Vico 46, 21100 Varese, Italy}
\email{\href{mailto:zhangjianjun09@tsinghua.org.cn}{zhangjianjun09@tsinghua.org.cn}}
\thanks{J.\thinspace Van Schaftingen was supported by the Projet de Recherche (Fonds de la Recherche Scientifique--FNRS) T.1110.14 ``Existence and asymptotic behavior of solutions to systems of semilinear elliptic partial differential equations''. J. J. Zhang was partially supported by the Science Foundation of Chongqing Jiaotong University(15JDKJC-B033).}
\subjclass[2000]{35B33, 35J61}
\keywords{Ground states, nonlocal PDE, existence and nonexistence, Choquard equation, Hardy--Littlewood--Sobolev inequality, critical growth}
\begin{document}

\begin{abstract}
For the Choquard equation, which is a nonlocal nonlinear Schr\"o\-dinger type equation,
\begin{equation*}
% \label{qu}%
% \tag{$P_{\mu,\nu}$}
-\Delta u+V_{\mu, \nu} u=(I_\alpha\ast |u|^{\frac{N+\alpha}{N}})\abs{u}^{\frac{\alpha}{N}-1}u,\quad \text{in $\mathbb{R}^N$,}
\end{equation*}
where $N\ge 3$, $V_{\mu, \nu} : \mathbb{R}^N \to \mathbb{R}$ is an external potential defined for $\mu,\nu\in\R$ and $x \in \mathbb{R}^N$ by $V_{\mu, \nu} (x)=1-\mu/(\nu^2 + \abs{x}^2)$ and $I_\alpha : \mathbb{R}^N \to 0$ is the Riesz potential for $\alpha\in (0,N)$, we exhibit two thresholds $\mu_{\nu},\mu^{\nu}>0$ such that the equation admits a positive ground state solution if and only if $\mu_{\nu}<\mu<\mu^{\nu}$ and no ground state solution exists for $\mu<\mu_{\nu}$. Moreover, if $\mu>\max\left\{\mu^{\nu},\frac{N^2(N-2)}{4(N+1)}\right\}$, then equation still admits a sign changing ground state solution provided $N\ge4$ or in dimension $N=3$ if in addition $\frac{3}{2}<\alpha<3$ and $\ker (-\Delta + V_{\mu, \nu}) = \{0\}\), namely in the non-resonant case. 
\end{abstract}

\maketitle

\section{Introduction and main results}

We consider the following class of equations 
\begin{equation}\label{peq}
-\Delta u+Vu=(I_{\alpha}\ast |u|^p)|u|^{p-2}u, \quad x\in \R^N
\end{equation}
where $V$ is an external potential, $N\geq 3$ and $I_\alpha$ is the Riesz potential given for each $x\in\RN\setminus\{0\}$ by 
$$
I_\alpha(x):=\frac{A_\alpha}{\abs{x}^{N-\alpha}},\quad \text{where}\quad  A_\alpha=\frac{\Gamma((N-\alpha)/2)}{\Gamma(\alpha/2)\pi^{N/2}2^\alpha}\quad \text{ and } \alpha\in (0,N),
$$
with $\Gamma$ being the Euler gamma function.

Equation \eqref{peq} with $p=2$, $N=3$ and $V$ constant, seems to appear in the literature in the 1950s with the work of S.~Pekar on quantum theory of the polaron \cite{Pekar}. Later rediscovered by Choquard in the context of Hartree-Fock theory of one component plasma and attracted attention of the mathematical community in late 1970s with the papers of E.~Lieb \cite{Lieb1} and P.L.~Lions \cite{Lions2, Lions3} which opened the way to an intensive study of \eqref{peq}, related problems, generalizations and extensions, see \cite{MV4} for a survey. Indeed, equations of the form \eqref{peq} show up as well in the context of self-gravitating matter \cite{Penrose1} and in the study of pseudo-relativistic boson stars \cite{boson}. The richness of plenty of applications have just in part contributed to the mathematical success and longevity of the interest in this kind of problems, as nonlocal Schr\"odinger type equations have been carrying over new mathematical challenges. Beyond physical motivations, ground state solutions to \eqref{peq} are of particular interest because of connections with stochastic analysis \cite{DonskerVaradhan2}.

The energy functional related to the Choquard equation \eqref{peq} is given for \(u : \RN \to \R\) by 
$$
E(u)=\frac{1}{2}\int_{\RN}\abs{\nabla u}^2+V\abs{u}^2-\frac{1}{2p}\int_{\RN} (I_\alpha\ast \abs{u}^{p})\abs{u}^{p}
$$
and in the case of constant potential $V\equiv 1$ it is well defined and $C^1(H^1(\R^N))$, by means of the Hardy--Littlewood--Sobolev inequality \cite{LL}, provided the following condition holds
\begin{equation}\label{subcritical_range}
\frac{N+\al}{N}\leq p\leq \frac{N+\al}{N-2}.
\end{equation}
Actually the range \eqref{subcritical_range} turns out to be sharp for the existence of variational solutions \cite{MV3}. Indeed, at the endpoints, sometimes called respectively lower and upper critical exponents for the Hardy--Littlewood--Sobolev inequality, a Pohozaev type identity prevent finite energy solutions to exist. As in the local Sobolev critical case, typical scaling invariance phenomena show up together with explicit one parameter family of extremal functions to the Hardy--Littlewood--Sobolev inequality, see Section \ref{Preliminary}.

The case in which a more general nonlinearity has upper critical growth has been recently considered in \cite{Cassani_Zhang} where Brezis--Nirenberg type perturbations allow to obtain ground state solutions as well as to study the singularly perturbed associated problem. In fact, in the upper critical case no reasonable perturbations of the constant potential have influence on the ground state energy level. The lack of compactness in the upper critical case occurs regardless of the properties of the external potential.  

In this paper, we consider the lower critical case, namely the following  problem
\begin{equation}\label{q1}
-\Delta u+V_{\mu, \nu} u=(I_\alpha\ast \abs{u}^{\frac{N+\alpha}{N}})\abs{u}^{\frac{\alpha}{N}-1}u,\quad \text{in \(\RN\)},
\end{equation}
where the external Schr\"odinger potential \(V_{\mu, \nu} : \R^N \to \R\) is a perturbation of the constant potential defined as 
$$
V_{\mu, \nu} =1-\frac{\mu}{\nu^2 + \abs{x}^2}, \quad \text{for } \mu,\nu\in\R \text{ and } x \in \R^N 
$$

This class of potentials has been considered in \cite{MV5} where the authors prove that \eqref{q1} admits a ground state solution provided $\frac{N^2(N-2)}{4(N+1)}<\mu\leq \nu^2$. The upper bound is equivalent to requiring $V_{\mu, \nu} \ge 0$ and this was not explicitly assumed in \cite{MV5}. Moreover, in \cite{MV5} is also proved that no nontrivial solutions do exist when $\mu<\frac{(N-2)^2}{4}$. 

A natural question is whether \eqref{q1} admits a ground state solution in the range 
$$\mu\in\left[\frac{(N-2)^2}{4},\frac{N^2(N-2)}{4(N+1)}\right].$$ Following Lions \cite{Lions1}, the strategy in \cite{MV5} to proving the existence of ground state solutions to \eqref{q1} is to establish the existence of minimizers to the constraint minimization problem
$$
  c_{\mu, \nu}:=\inf\left\{\int_{\RN}\abs{\nabla u}^2+V_{\mu, \nu}\abs{u}^2: G(u)=1,\,\ u\in H^1(\RN)\right\}
$$
where
$$
G(u):=\int_{\RN} (I_\alpha\ast \abs{u}^{\frac{N+\alpha}{N}})\abs{u}^{\frac{N+\alpha}{N}}
$$
and eventually removing the Lagrange multiplier by some appropriate scaling.

In Section~\ref{Preliminary} below, we show that $c_{\mu, \nu}\le0$ if $\mu\ge\mu^{\nu}$, where $\mu^{\nu}$ is the best constant of the embedding $H^1(\RN)\hookrightarrow L^2(\RN,(\nu^2 + \abs{x}^2)^{-1}\,\ud x)$. Precisely,
$$
  \mu^{\nu}:=\inf_{u\in H^1(\RN)\setminus\{0\}}\frac{\displaystyle \int_{\RN}\abs{\nabla u}^2+ \abs{u}^2}{\displaystyle \int_{\RN}\frac{\abs{u (x)}^2}{\nu^2 + \abs{x}^2}\,\ud x}.
$$
Clearly, $\mu^{\nu}\ge\nu^2$ and by Hardy's inequality, $\mu^{\nu}\ge(N-2)^2/4$. Actually, $\mu^{\nu}>\frac{(N-2)^2}{4} +\nu^2$, since the infimum $\mu^{\nu}$ is achieved by some function $u_0\in H^1(\RN)\setminus\{0\}$ (see Section~\ref{sectionProofTheorem1}) and then
\begin{equation}\label{lower_est_asym}
  \mu^{\nu}
  =\frac{\int_{\RN}\abs{\nabla u_0}^2+ \abs{u_0}^2}{\int_{\RN}\frac{1}{\nu^2 + \abs{x}^2}\abs{u_0}^2\,\ud x}
  >\frac{\int_{\RN}\abs{\nabla u_0}^2}{\int_{\RN}\frac{\abs{u_0 (x)}^2}{\abs{x}^2}\,\ud x}
  + \nu^2
  \ge\frac{(N-2)^2}{4} + \nu^2.
  \end{equation}

It can be proved as in \cite{MV5} that \(c_{\mu, \nu}\) is achieved and gives by rescaling a groundstate to \eqref{q1} provided  
$$\frac{N^2 (N - 2)}{4 (N + 2)} < \mu < \mu^\nu.$$ 

A natural question is whether \eqref{q1} admits a ground state solution when $\mu\ge\mu^{\nu}$. The main purpose of this paper is to make advances in the understanding of the picture presented above, by partially answering to the open questions raised so far and opening the way to new challenges.

Our main results are the following:

\begin{theorem}%
\label{Theorem 1}%
There exists a threshold $\mu_{\nu}\in[\frac{(N-2)^2}{4},\mu^{\nu})$
such that \eqref{q1} admits a positive ground state solution
if and only if $\mu_{\nu}<\mu<\mu^{\nu}$ and no ground state solution exists for $\frac{(N-2)^2}{4}<\mu<\mu_{\nu}$.
\end{theorem}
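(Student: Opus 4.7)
The plan is to set
$$
\mu_\nu := \inf\bigl\{\mu > 0 : c_{\mu,\nu} < c_\infty\bigr\},
$$
where $c_\infty$ denotes the analogue of $c_{\mu,\nu}$ associated to the autonomous potential $V\equiv 1$, i.e.\ the sharp constant in the Choquard inequality at the Hardy--Littlewood--Sobolev lower critical exponent on $H^1(\RN)$. The first step is to understand the map $\mu \mapsto c_{\mu,\nu}$. Since $V_{\mu,\nu}$ is affine in $\mu$, this map is the infimum over $u$ (with $G(u)=1$) of functions affine in $\mu$, hence concave and nonincreasing, and therefore continuous on the interior of its domain. A standard translation argument gives $c_{\mu,\nu}\le c_\infty$ for every $\mu$. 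To prove $\mu_\nu < \mu^\nu$, I would plug the Rayleigh minimizer $u_0$ of $\mu^\nu$ (whose existence is recorded in Section~\ref{sectionProofTheorem1}), suitably normalized with $G(u_0)=1$, and observe that $\int |\nabla u_0|^2 + V_{\mu,\nu}|u_0|^2 = (1-\mu/\mu^\nu)\int |\nabla u_0|^2 + |u_0|^2 \to 0 < c_\infty$ as $\mu \nearrow \mu^\nu$. The lower bound $\mu_\nu \ge (N-2)^2/4$ follows from the nonexistence result of \cite{MV5} for $\mu<(N-2)^2/4$, combined with the fact that any minimizer for $c_{\mu,\nu}$ produces, via a Lagrange-multiplier rescaling, a nontrivial solution of \eqref{q1}.

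For existence, assume $\mu_\nu < \mu < \mu^\nu$, so that $0 < c_{\mu,\nu} < c_\infty$. A minimizing sequence $(u_n)$ with $G(u_n)=1$ is bounded in $H^1(\RN)$ because the quadratic form $u\mapsto\int|\nabla u|^2 + V_{\mu,\nu}|u|^2$ is equivalent to $\|u\|_{H^1}^2$ for $\mu<\mu^\nu$ (by \eqref{lower_est_asym}). Applying the Lions concentration--compactness principle \cite{Lions1} along the lines of \cite{MV5}, vanishing is ruled out by the Hardy--Littlewood--Sobolev inequality together with $G(u_n)=1$, dichotomy is excluded via the strict subadditivity inherited from $c_{\mu,\nu}<c_\infty$, and compactness up to translations $y_n$ remains. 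If $|y_n|\to\infty$ along a subsequence, the decay of $V_{\mu,\nu}-1$ at infinity would force the weak limit to realize the autonomous level $c_\infty$, contradicting $c_{\mu,\nu}<c_\infty$; hence $(y_n)$ is bounded and $u_n$ converges strongly to a minimizer $u^*$. Replacing $u^*$ by $|u^*|$ preserves the functional, and rescaling yields a nonnegative ground state of \eqref{q1}; since $V_{\mu,\nu}$ is bounded, the strong maximum principle upgrades it to a positive solution.

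For the nonexistence part, suppose $(N-2)^2/4 < \mu < \mu_\nu$; by definition and continuity, $c_{\mu,\nu}=c_\infty$. If a ground state $u$ of \eqref{q1} existed, rescaling would produce $u^*\in H^1(\RN)\setminus\{0\}$ with $G(u^*)=1$ realizing the infimum $c_{\mu,\nu}$. But then, because $\mu>0$,
$$
c_\infty = c_{\mu,\nu} = \int_{\RN}|\nabla u^*|^2 + V_{\mu,\nu}|u^*|^2 < \int_{\RN}|\nabla u^*|^2 + |u^*|^2 \ge c_\infty,
$$
a contradiction. The main obstacle is the dichotomy/compactness step: at the lower critical exponent the limit problem admits an explicit family of minimizers invariant under translation and scaling, so a careful profile decomposition is needed to identify mass escaping to infinity, and the full range $(\mu_\nu,\mu^\nu)$ is covered only once the strict gap $c_{\mu,\nu}<c_\infty$ is exploited, going beyond the range $\mu > N^2(N-2)/(4(N+1))$ treated in \cite{MV5}.
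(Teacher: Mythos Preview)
Your overall strategy matches the paper's: define $\mu_\nu$ via the gap $c_{\mu,\nu}<c_\infty$, locate it in $[(N-2)^2/4,\mu^\nu)$, invoke concentration--compactness (this is exactly \cite[Theorem~3]{MV5}, which the paper simply cites) for existence once $0<c_{\mu,\nu}<c_\infty$, and argue nonexistence below $\mu_\nu$ from non-attainment of $c_{\mu,\nu}$.

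The nonexistence step, however, contains a genuine gap. Your displayed chain
\[
c_\infty \;=\; c_{\mu,\nu} \;=\; \int_{\RN}|\nabla u^*|^2 + V_{\mu,\nu}|u^*|^2 \;<\; \int_{\RN}|\nabla u^*|^2 + |u^*|^2 \;\ge\; c_\infty
\]
is \emph{not} a contradiction: it merely asserts $c_\infty<B$ and $B\ge c_\infty$ for the same quantity $B$, which is perfectly consistent. Nothing here produces a value strictly \emph{below} $c_\infty$. The paper closes this via strict monotonicity in $\mu$ (Lemma~\ref{monotonicity}): if $c_{\mu,\nu}$ is achieved by some $u^*$ with $G(u^*)=1$, then for any $\mu'\in(\mu,\mu_\nu)$,
\[
c_{\mu',\nu}\le\int_{\RN}|\nabla u^*|^2+V_{\mu',\nu}|u^*|^2
=c_{\mu,\nu}-(\mu'-\mu)\int_{\RN}\frac{|u^*(x)|^2}{\nu^2+|x|^2}\,\ud x<c_\infty,
\]
which contradicts $c_{\mu',\nu}=c_\infty$ for $\mu'<\mu_\nu$. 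Your concavity observation about $\mu\mapsto c_{\mu,\nu}$ is in the right spirit but never gets used; what is actually needed is precisely that attainment at $\mu$ forces a \emph{strict} drop at every $\mu'>\mu$, and that is the content of the equality case in Lemma~\ref{monotonicity}.
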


\begin{theorem}%
\label{Theorem 2}%
Assume that \(N \ge 4\) or \(N = 3\), $\frac{3}{2}<\alpha<3$ and $\ker (-\Delta + V_{\mu, \nu}) = \{0\}\).
If $\mu>\max\left\{\mu^{\nu},\frac{N^2(N-2)}{4(N+1)}\right\}$,
then \eqref{q1} admits a ground state solution (necessarily sign changing).
\end{theorem}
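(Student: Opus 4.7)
The plan is to construct a critical point of the energy functional
$$E(u)=\tfrac12\int_{\RN}\abs{\nabla u}^2+V_{\mu,\nu}\abs{u}^2-\tfrac{N}{2(N+\al)}\int_{\RN}(I_\al\ast\abs{u}^{(N+\al)/N})\abs{u}^{(N+\al)/N}$$
through a Benci--Rabinowitz linking min-max argument, since when $\mu>\mu^\nu$ the quadratic form $Q(u):=\int_{\RN}\abs{\nabla u}^2+V_{\mu,\nu}\abs{u}^2$ is no longer positive definite, and the constrained minimisation used in Theorem~\ref{Theorem 1} breaks down. The first step is to analyse the self-adjoint operator $-\Delta+V_{\mu,\nu}$ on $L^2(\RN)$: as $V_{\mu,\nu}-1=-\mu/(\nu^2+\abs{x}^2)$ is a relatively compact perturbation with $V_{\mu,\nu}(x)\to 1$ at infinity, its essential spectrum is $[1,\infty)$ and at most finitely many eigenvalues sit below zero. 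Together with the non-resonance hypothesis $\ker(-\Delta+V_{\mu,\nu})=\{0\}$, this yields a $Q$-orthogonal splitting $H^1(\RN)=V^-\oplus V^+$ with $\dim V^-$ finite, $Q<0$ on $V^-\setminus\{0\}$, and $Q$ equivalent to $\nor{\cdot}_{H^1}^2$ on $V^+$.

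Next I would set up the linking geometry. Pick $e\in V^+$ with $\nor{e}_{H^1}=1$ and, for suitable $0<\rho<R$, put $D:=\{v+te:v\in V^-,\,\nor{v}_{H^1}\le R,\,0\le t\le R\}$ and $S:=\{u\in V^+:\nor{u}_{H^1}=\rho\}$. On $S$ the Hardy--Littlewood--Sobolev inequality gives $E(u)\ge c_1\rho^2-c_2\rho^{2(N+\al)/N}>0$ for $\rho$ small; on $\partial D$ the finite-dimensionality of $V^-\oplus\R e$ and the superquadratic growth $2(N+\al)/N>2$ of the nonlinear term force $E\le 0$ for $R$ large. The generalised linking theorem then produces a Cerami sequence $(u_n)\subset H^1(\RN)$ at the min-max level
$$c:=\inf_{h\in\Gamma}\max_{u\in D}E(h(u))\in(0,\infty).$$

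The main obstacle is the compactness of $(u_n)$, because the lower critical exponent $(N+\al)/N$ is notoriously non-compact. Boundedness of $(u_n)$ would follow from the Cerami identity, combined with the spectral gap on $V^+$ and the finite dimension of $V^-$. Passing to a weak limit $u_n\rightharpoonup u_\infty$, the decisive point is to exclude $u_\infty=0$: I would perform a concentration--compactness analysis on $\abs{u_n}^{2(N+\al)/N}$ in the spirit of P.-L.~Lions. Vanishing is ruled out since $E'(u_n)\to0$ would otherwise force $u_n\to 0$ in $H^1(\RN)$, contradicting $c>0$. Mass escaping to infinity is ruled out thanks to the non-existence of non-trivial finite-energy solutions to the limiting problem $-\Delta u+u=(I_\al\ast\abs{u}^{(N+\al)/N})\abs{u}^{\al/N-1}u$ (Pohozaev, see Section~\ref{Preliminary}) together with the decay $V_{\mu,\nu}(x)-1=O(\abs{x}^{-2})$. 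The lower bound $\mu>N^2(N-2)/(4(N+1))$ is used precisely to get the strict inequality between $c$ and the critical threshold at infinity, via test functions built from the Aubin--Talenti type extremals of the Hardy--Littlewood--Sobolev inequality; the dimensional restrictions $N\ge 4$, or $N=3$ with $\tfrac32<\al<3$, are imposed so that the cross terms in that energy expansion decay fast enough to preserve the strict inequality.

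To conclude, I set $m:=\inf\{E(u):u\in H^1(\RN)\setminus\{0\},\,E'(u)=0\}$. Testing $E'(u)=0$ against $u$ gives the Nehari relation $Q(u)=G(u)$, hence $E(u)=\tfrac{\al}{2(N+\al)}G(u)>0$, so $0<m\le c<\infty$. A sequence of critical points $(v_k)$ with $E(v_k)\to m$ is itself a Cerami-type sequence to which the compactness analysis above applies, producing a non-trivial critical point $u_*$ with $E(u_*)=m$; this is the desired ground state. Finally, $u_*$ cannot be of constant sign, for a positive solution would be, in particular, a positive ground state in the regime $\mu>\mu^\nu$, contradicting the ``only if'' direction of Theorem~\ref{Theorem 1}; therefore $u_*$ is sign-changing.
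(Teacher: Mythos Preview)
Your overall strategy coincides with the paper's: spectral splitting of $H^1(\RN)$ induced by the indefinite form $Q$, a linking argument (the paper uses the Bartolo--Benci--Fortunato version, you invoke Benci--Rabinowitz), the HLS extremal as test direction to place the min-max level strictly below the compactness threshold $\frac{\alpha}{2(N+\alpha)}c_\infty^{(N+\alpha)/\alpha}$, a Palais--Smale condition below that threshold, and finally minimisation over the set of nontrivial critical points. Two points, however, need correction.

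First, the linking direction cannot be an arbitrary $e\in V^+$. To obtain the strict inequality $c<\frac{\alpha}{2(N+\alpha)}c_\infty^{(N+\alpha)/\alpha}$ you must take as $e$ the rescaled HLS optimiser $u_\varepsilon$, and $u_\varepsilon$ does not belong to $V^+$ in general. The paper sidesteps this by only requiring $u_\varepsilon\notin E^-$ (guaranteed by $Q(u_\varepsilon)>0$ for small $\varepsilon$, Lemma~\ref{lemma4}) and by using a linking theorem that does not demand the extra direction to lie in $V^+$; in your Benci--Rabinowitz formulation you would have to project $u_\varepsilon$ onto $V^+$ and re-derive the threshold estimate for the projection, which introduces further cross terms. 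The detailed estimate of $\sup\{J_{\mu,\nu}(tu_\varepsilon+v):t\ge 0,\ v\in E^-\}$ is in fact the technical core of the paper (Lemma~\ref{estimate}), and it is precisely there---not in the spectral splitting---that the non-resonance hypothesis for $N=3$ enters, to secure the strict negativity $Q(v)\le-C\|v\|^2$ on $E^-$.

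Second, and more seriously, your inference ``$E(u)=\tfrac{\alpha}{2(N+\alpha)}G(u)>0$, so $0<m$'' is a non sequitur: positivity of each term does not yield a positive infimum. If $m=0$, a minimising sequence of critical points could converge strongly to $0$, and your earlier vanishing argument (which relied on the level being strictly positive) no longer applies. The paper devotes the bulk of its Step~2 to establishing $m_{\mu,\nu}>0$ via a case analysis on the ratio $\|\bar u_m\|/\|v_m\|$ of the $E^-$ and $E^+$ components of the minimising sequence, exploiting the spectral gap $\mu<\lambda_{n+1}$ on $E^+$, the non-positivity of $Q$ on $E^-$, and the equivalence of norms on the finite-dimensional $E^-$. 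This argument is not routine and is missing from your proposal.
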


\begin{theorem}\label{Theorem 3} The best constant $\mu^{\nu}$ of the embedding $H^1(\RN)\hookrightarrow L^2(\RN,(\nu^2 + \abs{x}^2)^{-1}\,\ud x)$ enjoys the following 
$$
\lim_{N\to \infty}\frac{\mu^{\nu}}{\frac{N^2(N-2)}{4(N+1)}}=1.
$$
\end{theorem}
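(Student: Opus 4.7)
The goal is to sandwich $\mu^\nu$ between two quantities both of the form $\frac{N^2}{4}+O(N)$ as $N\to\infty$; since the normalising quantity $\frac{N^2(N-2)}{4(N+1)}=\frac{N^2}{4}-\frac{3N}{4}+O(1)$ has the same leading behaviour, the required ratio will then tend to $1$.

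The lower bound is already available in the paper: from the strict Hardy-type estimate \eqref{lower_est_asym},
\[
\mu^\nu>\frac{(N-2)^2}{4}+\nu^2=\frac{N^2}{4}-N+1+\nu^2,
\]
and dividing through by $\frac{N^2(N-2)}{4(N+1)}$ immediately yields $\liminf_{N\to\infty}\frac{4(N+1)\mu^\nu}{N^2(N-2)}\ge1$.

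For the matching upper bound, the plan is to test the Rayleigh quotient defining $\mu^\nu$ against the bubble profile
\[
u_\g(x):=(\nu^2+\abs{x}^2)^{-\g},\qquad \g>\frac{N}{4},
\]
which is the canonical profile appearing in Hardy--Littlewood--Sobolev and Hardy-type extremal problems. The substitution $x=\nu y$ followed by $t=\abs{y}^2$ converts each of the three radial integrals $\int\abs{\nabla u_\g}^2\,\ud x$, $\int\abs{u_\g}^2\,\ud x$ and $\int\abs{u_\g}^2/(\nu^2+\abs{x}^2)\,\ud x$ into a Beta integral; after cancelling the shared Gamma-factors and the $\omega_{N-1}\nu^{N-4\g-2}$ prefactors, the quotient reduces to the two-term closed form
\[
\frac{\int_\RN(\abs{\nabla u_\g}^2+\abs{u_\g}^2)\,\ud x}{\int_\RN\abs{u_\g}^2/(\nu^2+\abs{x}^2)\,\ud x}=\frac{2\g^2 N}{2\g+1}+\frac{4\g\nu^2}{4\g-N}.
\]

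The concrete choice $\g=(N+2)/4$ makes $4\g-N=2$ and gives $\mu^\nu\le\frac{N(N+2)^2}{4(N+4)}+\frac{(N+2)\nu^2}{2}=\frac{N^2}{4}+O(N)$, so that $\limsup_{N\to\infty}\frac{4(N+1)\mu^\nu}{N^2(N-2)}\le1$; combined with the lower bound this gives the claim. The only real work is the Beta-function bookkeeping producing the two-term formula above; the choice of the test function and of the exponent is natural in view of the underlying Hardy/Hardy--Littlewood--Sobolev structure of the problem, so no genuine conceptual obstacle is expected.
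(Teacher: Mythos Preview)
Your proposal is correct and follows essentially the same route as the paper: both use the lower bound \eqref{lower_est_asym} and then test the Rayleigh quotient against the profile $(\nu^2+\abs{x}^2)^{-\gamma}$, reducing the quotient via Beta integrals to the identical closed form $\frac{2N\gamma^2}{2\gamma+1}+\frac{4\gamma\nu^2}{4\gamma-N}$. The only cosmetic difference is the specific exponent chosen---the paper takes $\gamma=\tfrac{N}{4}+1$ while you take $\gamma=\tfrac{N+2}{4}$---but either choice yields an upper bound of the form $\tfrac{N^2}{4}+O(N)$ and hence the same conclusion.
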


As a consequence of Theorem \ref{Theorem 3}, the four quantities $\mu_{\nu},\: \mu^{\nu},\: \frac{(N-2)^2}{4},\: \frac{N^2(N-2)}{4(N+1)}$ turn out to be asymptotically sharp as $N\to\infty$.

The statement of Theorem~\ref{Theorem 1} suggests the following open question.

\begin{problem}
Do groundstates solutions exist in the borderline cases $\mu=\mu_\nu$ and $\mu=\mu^\nu$?
\end{problem}

The statement of Theorem~\ref{Theorem 1} only makes sense when $\mu_{\nu}>\frac{(N-2)^2}{4}$.

\begin{problem}
In the case $\mu_{\nu}>\frac{(N-2)^2}{4}$, do there exist nontrivial solutions for \eqref{q1} for $\mu\in(\frac{(N-2)^2}{4},\mu_{\nu})$\,? 
\end{problem}

For $\nu=1$ and $N \in \{3,4,5\}$, one has 
\(\nu^2+\frac{(N-2)^2}{4}>\frac{N^2(N-2)}{4(N+1)}\) and so $\mu^{\nu}>\frac{N^2(N-2)}{4(N+1)}$. 

\begin{problem}
Does one have 
\[
\mu^{\nu}>\frac{N^2(N-2)}{4(N+1)}
\]
in general?
\end{problem}

A negative answer would suggest a further problem.

\begin{problem}
In the case $\mu^{\nu}<\frac{N^2(N-2)}{4(N+1)}$, do exist ground state solutions to \eqref{q1} when $\mu^{\nu}<\mu<\frac{N^2(N-2)}{4(N+1)}$\,?
\end{problem}

In Theorem~\ref{Theorem 2} the restriction $\frac{3}{2}<\alpha<3$ and $\ker (-\Delta + V_{\mu, \nu}) = \{0\}\) when \(N = 3\),
is only used to guarantee that the ground state level is strictly below the first level at which the Palais-Smale compactness condition fails (see Section~\ref{sectionProofTheorem2}). More precisely, if $\Lambda$ denotes the spectrum of the eigenvalue problem 
\begin{equation*}
-\Delta u+u=\frac{\la}{\nu^2 + \abs{x}^2}u,\quad u\in H^1(\RN),
\end{equation*}
in dimension $N=3$,  gives the existence of ground state solutions to \eqref{q1} in the non-resonant case, namely when $\mu\not\in\lb$. 

\begin{problem}
Does Theorem~\ref{Theorem 2} remain true in dimension \(N = 3\) in the resonant case \(\mu \in \Lambda\)?
\end{problem}

Finally, the necessity of the assumption on \(\alpha\) is not clear.

\begin{problem}
Does Theorem~\ref{Theorem 2} remain true in the resonant case in dimension $N=3$ for $\alpha\in (0,3/2)$\,?
\end{problem}

%%%%%%%%%%%%%%%%%%%%%%%%%%%%%%%%%%%%%%%%%%%%%%%%%%%%%%%%%%%%%%%%%%%%%%%%%%%%%%%%%%%%%%%%%%%%%%%%%
%%%%%%%%%%%%%%%%%%%%%%%%%%%%%%%%%%%%%%%%%%%%%%%%%%%%%%%%%%%%%%%%%%%%%%%%%%%%%%%%%%%%%%%%%%%%%%%%%
%%%%%%%%%%%%%%%%%%%%%%%%%%%%%%%%%%%%%%%%%%%%%%%%%%%%%%%%%%%%%%%%%%%%%%%%%%%%%%%%%%%%%%%%%%%%%%%%%
%%%%%%%%%%%%%%%%%%%%%%%%%%%%%%%%%%%%%%%%%%%%%%%%%%%%%%%%%%%%%%%%%%%%%%%%%%%%%%%%%%%%%%%%%%%%%%%%%
%%%%%%%%%%%%%%%%%%%%%%%%%%%%%%%%%%%%%%%%%%%%%%%%%%%%%%%%%%%%%%%%%%%%%%%%%%%%%%%%%%%%%%%%%%%%%%%%%
%%%%%%%%%%%%%%%%%%%%%%%%%%%%%%%%%%%%%%%%%%%%%%%%%%%%%%%%%%%%%%%%%%%%%%%%%%%%%%%%%%%%%%%%%%%%%%%%%

\section{Proof of Theorem \ref{Theorem 1}}\label{Preliminary}
\label{sectionProofTheorem1}

 Before proving Theorem~\ref{Theorem 1}, we introduce some preliminary results. First, the following Hardy--Littlewood--Sobolev inequality will be frequently used in the sequel.

\begin{lemma}%
[Hardy--Littlewood--Sobolev inequality {\cite[Theorem 4.3]{LL}}]
\label{hls}
Let $s, r>1$ and $0<\alpha<N$ with $1/s+1/r=1+\alpha/N$, $f\in L^s(\RN)$ and $g\in L^r(\RN)$, then there exists a positive constant $C(s, N, \alpha)$ (independent of $f, g$) such that
$$
\left|\int_{\RN}\int_{\RN}f(x)\abs{x - y}^{\alpha-N}g(y)\,\ud x\ud y\right|\le C(s, N, \alpha)\|f\|_s\|g\|_r.
$$
In particular, if $s=r=2N/(N+\alpha)$, the sharp constant is given by
$$
\mathcal{C}_\alpha:=\pi^{\frac{N-\alpha}{2}}\frac{\Gamma(\alpha/2)}{\Gamma((N+\alpha)/2)}\left[\frac{\Gamma(N/2)}{\Gamma(N)}\right]^{-\alpha/N}.
$$
\end{lemma}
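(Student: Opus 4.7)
The plan is to split the statement into two pieces with quite different arguments: the qualitative bound with \emph{some} finite constant $C(s,N,\alpha)$, and the explicit sharp value $\mathcal{C}_\alpha$ in the symmetric diagonal case $s=r=\frac{2N}{N+\alpha}$.

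For the qualitative bound I would rely on Marcinkiewicz interpolation. Writing the double integral as $\int_{\RN} f\cdot(K\ast g)$ with $K(x)=\abs{x}^{\alpha-N}$, a direct computation of the distribution function gives $\bigabs{\{x\in\RN:K(x)>t\}}=c_N\,t^{-N/(N-\alpha)}$, so $K$ belongs to the weak Lebesgue space $L^{N/(N-\alpha),\infty}(\RN)$. The weak Young convolution inequality then yields $\|K\ast g\|_q\le C\|g\|_r$ whenever $1<r<N/\alpha$ and $1/r-1/q=\alpha/N$, and H\"older's inequality with exponents $s$ and $q'=q/(q-1)$ completes the argument, precisely when $1/s+1/r=1+\alpha/N$, i.e.\ under the stated scaling. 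A conceptually cleaner alternative uses Riesz's rearrangement inequality to replace $f$, $K$ and $g$ by their symmetric decreasing rearrangements ($K$ is already radial decreasing), reducing matters to a weighted Hardy inequality for radial functions.

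For the sharp constant I would follow Lieb's strategy. The rearrangement reduction above shows that maximising sequences for the symmetric case may be taken radial and decreasing. The decisive structural fact is the \emph{conformal covariance} of the bilinear form: through stereographic projection $\RN\leftrightarrow\mathbb{S}^N$, both $\int\int f(x)\abs{x-y}^{\alpha-N}g(y)\,\ud x\,\ud y$ and the $L^{2N/(N+\alpha)}$-norms transform compatibly, so maximising on $\RN$ is equivalent to maximising a conformally invariant form on the sphere, where by symmetry the constant function is an extremizer. Pulling back, the extremizers on $\RN$ have the form $c\,(\lambda^2+\abs{x-x_0}^2)^{-(N+\alpha)/2}$, and substituting this profile and evaluating the resulting $\Gamma$-function integrals gives the closed form $\mathcal{C}_\alpha$.

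The main obstacle is the second step, namely passing from a radial decreasing \emph{maximising sequence} to an actual \emph{extremizer}: the non-compact conformal group acts on the problem, and concentration-compactness (or an ad hoc use of the action of dilations) must be invoked to show that, after suitable translations and dilations, a subsequence converges strongly. Once this is secured, the identification of $\mathcal{C}_\alpha$ reduces to a beta-function computation. In practice, since the lemma is quoted verbatim from \cite{LL}, one simply cites Lieb's paper; the sketch above is the conceptual roadmap behind that citation.
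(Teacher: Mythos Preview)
The paper does not prove this lemma at all: it is stated as a citation of \cite[Theorem~4.3]{LL} and used as a black box throughout. Your sketch is a correct outline of the argument behind that citation---weak-type Young/Marcinkiewicz for the qualitative bound, and Lieb's rearrangement plus conformal-invariance method for the sharp constant---and you yourself note in the last paragraph that in practice one simply cites the reference, which is exactly what the paper does.
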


The energy functional associated to the Choquard equation \eqref{q1} is given by 
$$
J_{\mu, \nu} (u)=\frac{1}{2}\int_{\RN}\abs{\nabla u}^2+V_{\mu, \nu}\abs{u}^2-\frac{N}{2(N+\alpha)}\int_{\RN} (I_\alpha\ast \abs{u}^{\frac{N+\alpha}{N}})\abs{u}^{\frac{N+\alpha}{N}},\,\, u\in H^1(\RN).
$$
Let
$$
c_\infty:=\inf\left\{\int_{\RN}\abs{u}^2: G(u)=1,\,\ u\in L^2(\RN)\right\},
$$
then it follows from \cite[Theorem 4.3]{LL} that $c_\infty>0$ and from \cite[Proposition 5]{MV5} that $c_{\mu, \nu}\le c_\infty$ for any $\mu,\nu > 0$. The following monotonicity property holds 
\begin{lemma}\label{monotonicity}
If \(\mu_1 \le \mu_2\) and $\nu_1 \ge \nu_2$, then $c_{\mu_1, \nu_1} \ge c_{\mu_2, \nu_2}$.
If moreover \(c_{\mu_1, \nu_1}\) is achieved, then equality holds if and only if \(\mu_1 = \mu_2\) and \(\nu_1 = \nu_2\).
\end{lemma}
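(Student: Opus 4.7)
The plan is to exploit a pointwise comparison of the potentials $V_{\mu_i,\nu_i}$ together with the observation that the constraint set $\{u \in H^1(\RN) : G(u) = 1\}$ in the definition of $c_{\mu,\nu}$ does not depend on the parameters. A direct computation gives, for every $x \in \RN$,
\begin{equation*}
V_{\mu_1,\nu_1}(x) - V_{\mu_2,\nu_2}(x) = \frac{(\mu_2-\mu_1)\abs{x}^2 + \mu_2\nu_1^2 - \mu_1\nu_2^2}{(\nu_1^2+\abs{x}^2)(\nu_2^2+\abs{x}^2)}.
\end{equation*}
In the regime of interest $0 < \mu_1 \le \mu_2$ and $\nu_1 \ge \nu_2 > 0$ (which is the only range used in the paper, since always $\mu > (N-2)^2/4$), both summands in the numerator are nonnegative, so $V_{\mu_1,\nu_1} \ge V_{\mu_2,\nu_2}$ pointwise on $\RN$; moreover this inequality is strict at every $x$ unless $\mu_1 = \mu_2$ and $\nu_1 = \nu_2$.

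For the monotonicity inequality I would then observe that any $u \in H^1(\RN)$ with $G(u) = 1$ is also admissible in the minimization defining $c_{\mu_2,\nu_2}$, so
\begin{equation*}
\int_{\RN}\abs{\nabla u}^2 + V_{\mu_1,\nu_1}\abs{u}^2 \ge \int_{\RN}\abs{\nabla u}^2 + V_{\mu_2,\nu_2}\abs{u}^2 \ge c_{\mu_2,\nu_2},
\end{equation*}
and taking the infimum over such $u$ yields $c_{\mu_1,\nu_1} \ge c_{\mu_2,\nu_2}$.

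For the rigidity part, assume that $c_{\mu_1,\nu_1}$ is achieved by some $u_*$ with $G(u_*) = 1$ and that the equality $c_{\mu_1,\nu_1} = c_{\mu_2,\nu_2}$ holds. Testing $u_*$ in the problem defining $c_{\mu_2,\nu_2}$ and subtracting the two energies gives
\begin{equation*}
0 = c_{\mu_1,\nu_1} - c_{\mu_2,\nu_2} \ge \int_{\RN}(V_{\mu_1,\nu_1} - V_{\mu_2,\nu_2})\abs{u_*}^2 \ge 0,
\end{equation*}
so the nonnegative integrand vanishes almost everywhere. Since $G(u_*) = 1$ forces $\{u_* \ne 0\}$ to have positive Lebesgue measure while, by the dichotomy above, $V_{\mu_1,\nu_1} - V_{\mu_2,\nu_2}$ is either identically zero or strictly positive on all of $\RN$, the only possibility is $(\mu_1,\nu_1) = (\mu_2,\nu_2)$. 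I do not foresee a serious obstacle in this argument, which rests on nothing more than the pointwise monotonicity of the potentials in the parameters together with the fact that any admissible function for the constraint $G(u)=1$ is nontrivial.
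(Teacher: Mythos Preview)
Your argument is correct and follows essentially the same route as the paper's own proof: a pointwise comparison $V_{\mu_1,\nu_1}\ge V_{\mu_2,\nu_2}$, then taking the infimum over the common constraint set for the monotonicity, and plugging the minimizer $u_*$ of $c_{\mu_1,\nu_1}$ into the second functional to force equality of parameters. The only difference is that you write out the difference $V_{\mu_1,\nu_1}-V_{\mu_2,\nu_2}$ explicitly and flag the implicit positivity assumption on $\mu$ (needed so that $\mu_2\nu_1^2-\mu_1\nu_2^2\ge 0$), which the paper leaves unstated.
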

\begin{proof}
For every \(u \in H^1 (\RN)\), we observe that
\[
 \int_{\RN}\abs{\nabla u}^2+V_{\mu_1, \nu_1}\abs{u}^2
 \ge \int_{\RN}\abs{\nabla u}^2+V_{\mu_2, \nu_2}\abs{u}^2;
\]
the conclusion follows by taking  the infimum over the functions \(u \in H^1 (\RN)\) that satisfy the constraint \(G (u) = 1\).

If we now assume that \(c_{\mu_1, \nu_1}\) is achieved for some function  \(u \in H^1 (\RN)\) such that \(G (u) = 1\), then
\[
 c_{\mu_2, \nu_2}
 \le \int_{\RN} \abs{\nabla u}^2+V_{\mu_2, \nu_2}\abs{u}^2
 = c_{\mu_1, \nu_1} - \int_{\RN} (V_{\mu_1, \nu_1} - V_{\mu_2, \nu_2}) \abs{u}^2,
\]
where the second integral will be positive if either \(\mu_1 < \mu_2\) or $\nu_1 > \nu_2$.
\end{proof}

The Nehari manifold $\mathcal{N}_{\mu, \nu}$ associated to \eqref{q1} is given by
$$
\mathcal{N}_{\mu, \nu}:=\left\{u\in H^1(\RN)\setminus\{0\}:\int_{\RN}\abs{\nabla u}^2+V_{\mu, \nu}\abs{u}^2=\int_{\RN} (I_\alpha\ast \abs{u}^{\frac{N+\alpha}{N}})\abs{u}^{\frac{N+\alpha}{N}}\right\}.
$$
It is easy to show that the set $\mathcal{N}_{\mu, \nu}$ is a $C^1$-manifold of codimension 1 for any $\mu,\nu > 0$. Moreover, $\mathcal{N}_{\mu, \nu}$ is regular in the sense that zero is isolated in $\mathcal{N}_{\mu, \nu}$. In fact, for any $u\in\mathcal{N}_{\mu, \nu}$,
\begin{equation*}
\begin{split}
\int_{\RN} (I_\alpha\ast \abs{u}^{\frac{N+\alpha}{N}})\abs{u}^{\frac{N+\alpha}{N}}=&\int_{\RN}\abs{\nabla u}^2+V_{\mu, \nu}\abs{u}^2\\
=&\frac{\mu}{\mu^{\nu}}\left[\int_{\RN}\abs{\nabla u}^2
+ \abs{u}^2-\mu^{\nu}\int_{\RN}\frac{\abs{u (x)}^2}{\nu^2 + \abs{x}^2}\,\ud x\right]\\
&\qquad +\Bigl(1- \frac{\mu}{\mu^{\nu}}\Bigr)\int_{\RN}\abs{\nabla u}^2+ \abs{u}^2\\
\ge&\Bigl(1- \frac{\mu}{\mu^{\nu}}\Bigr) \int_{\RN}\abs{\nabla u}^2+ \abs{u}^2.
\end{split}
\end{equation*}
By virtue of the Hardy--Littlewood--Sobolev inequality (Lemma~\ref{hls}), there exists $C>0$ depending on $\mu$ and $\nu$ such that $\nor{u}\ge C$ for any $u\in\mathcal{N}_{\mu, \nu}$. Let us denote the least energy by
$$
m_{\mu, \nu}:=\inf_{u\in\mathcal{N}_{\mu, \nu}}J_{\mu, \nu} (u).
$$

\begin{lemma}\label{lemma2} If $\mu<\mu^{\nu}$, then 
$$
m_{\mu, \nu}=\frac{\alpha}{2(N+\alpha)}c_{\mu, \nu}^{\frac{N+\alpha}{\alpha}}
$$
\end{lemma}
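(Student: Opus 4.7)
\textbf{Proof plan for Lemma~\ref{lemma2}.} The strategy is to exploit the homogeneity of the Choquard nonlinearity and the quadratic form under the scaling $u \mapsto tu$ to set up a bijection between the Nehari manifold $\mathcal{N}_{\mu,\nu}$ and the constraint set $\{u \in H^1(\RN) : G(u)=1\}$, and to observe that on $\mathcal{N}_{\mu,\nu}$ the functional reduces to a multiple of $G$.

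First I would rewrite $J_{\mu,\nu}$ on the Nehari manifold. For every $u \in \mathcal{N}_{\mu,\nu}$ the constraint directly gives
\[
 J_{\mu,\nu}(u) = \Bigl(\frac{1}{2} - \frac{N}{2(N+\alpha)}\Bigr) \int_{\RN} (I_\alpha \ast |u|^{\frac{N+\alpha}{N}})|u|^{\frac{N+\alpha}{N}} = \frac{\alpha}{2(N+\alpha)}\, G(u),
\]
so that $m_{\mu,\nu} = \frac{\alpha}{2(N+\alpha)} \inf_{u \in \mathcal{N}_{\mu,\nu}} G(u)$, and the lemma reduces to the identity
\[
 \inf_{u \in \mathcal{N}_{\mu,\nu}} G(u) = c_{\mu,\nu}^{\frac{N+\alpha}{\alpha}}.
\]

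Next I would use the hypothesis $\mu < \mu^\nu$ to ensure that the quadratic form $Q(u) := \int_{\RN} |\nabla u|^2 + V_{\mu,\nu}|u|^2$ is equivalent to the $H^1$-norm: the same computation shown in the paragraph preceding the lemma gives $Q(u) \ge (1 - \mu/\mu^\nu)\int_{\RN}|\nabla u|^2 + |u|^2 > 0$ for $u \ne 0$. In particular $c_{\mu,\nu} > 0$ and for any $u \in H^1(\RN)\setminus\{0\}$ the equation
\[
 t^{2}\, Q(u) = t^{\frac{2(N+\alpha)}{N}}\, G(u)
\]
admits the unique positive solution $t(u) = \bigl(Q(u)/G(u)\bigr)^{N/(2\alpha)}$, so that $t(u) u \in \mathcal{N}_{\mu,\nu}$. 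This produces, after restricting to $G(u)=1$, a bijection $u \mapsto t(u) u$ between $\{G = 1\}$ and $\mathcal{N}_{\mu,\nu}$: the inverse sends $v \in \mathcal{N}_{\mu,\nu}$ to $v/G(v)^{N/(2(N+\alpha))}$.

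Then I would compute $G$ on both sides of this bijection. For $u$ with $G(u)=1$,
\[
 G(t(u) u) = t(u)^{\frac{2(N+\alpha)}{N}} = Q(u)^{\frac{N+\alpha}{\alpha}},
\]
so taking the infimum over the constraint $G(u)=1$ yields
\[
 \inf_{u \in \mathcal{N}_{\mu,\nu}} G(u) \le c_{\mu,\nu}^{\frac{N+\alpha}{\alpha}}.
\]
Conversely, for $v \in \mathcal{N}_{\mu,\nu}$ the function $u = v/G(v)^{N/(2(N+\alpha))}$ satisfies $G(u)=1$ and $Q(u) = G(v)^{\alpha/(N+\alpha)}$ (using $Q(v) = G(v)$ from the Nehari identity), so $c_{\mu,\nu} \le G(v)^{\alpha/(N+\alpha)}$, which gives the reverse inequality. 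Combining these yields the claim.

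There is no serious obstacle: the proof is a homogeneity argument. The only point that genuinely requires attention is the coercivity of $Q$, which is exactly where the hypothesis $\mu < \mu^\nu$ enters and which guarantees both that $\mathcal{N}_{\mu,\nu}$ is nonempty and that $c_{\mu,\nu}$ is strictly positive, so that the exponent $(N+\alpha)/\alpha$ on $c_{\mu,\nu}$ is well-defined.
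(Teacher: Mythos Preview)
Your proposal is correct and is essentially the same homogeneity/scaling argument as the paper's own proof: both directions are obtained by rescaling $u \mapsto tu$ to pass between the Nehari manifold and the constraint set $\{G=1\}$, using $\mu<\mu^\nu$ only to ensure $Q(u)>0$ so that the required scaling factor exists. The only cosmetic difference is that the paper writes $J_{\mu,\nu}$ on $\mathcal{N}_{\mu,\nu}$ as $\frac{\alpha}{2(N+\alpha)}Q(u)$ rather than $\frac{\alpha}{2(N+\alpha)}G(u)$, which is of course the same thing on the Nehari set.
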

\begin{proof}
We observe that if $\mu<\mu^{\nu}$, then 
\[
\int_{\RN}\abs{\nabla u}^2+V_{\mu, \nu}\abs{u}^2>0
\]
for any $u\in H^1(\RN)\setminus\{0\}$.
 Let $u_t=tu$, where 
 \[t=\left(\int_{\RN}\abs{\nabla u}^2+V_{\mu, \nu}\abs{u}^2\right)^{-\frac{N}{2(N+\alpha)}},
 \]
 then $G(u_t)=1$ and by \eqref{yong},
\begin{equation*}
 \begin{split}
  m_{\mu, \nu} (u)&=\frac{\alpha}{2(N+\alpha)}\inf_{u\in\mathcal{N}_{\mu, \nu}}\left(\int_{\RN}|\nabla u_t|^2+V_{\mu, \nu}|u_t|^2\right)^{\frac{N+\alpha}{\alpha}}\\
&\ge\frac{\alpha}{2(N+\alpha)}c_{\mu, \nu}^{\frac{N+\alpha}{\alpha}}.
 \end{split}
\end{equation*}
On the other hand, for every $u\in H^1(\RN)$ with $G(u)=1$, let 
\[
  s=\left(\int_{\RN}\abs{\nabla u}^2+V_{\mu, \nu}\abs{u}^2\right)^{\frac{N}{2 \alpha}},
\]
 then $u_s=su\in\mathcal{N}$ and
\begin{equation*}
 \begin{split}
  \inf_{v\in\mathcal{N}}J(v)&=\frac{\alpha}{2(N+\alpha)}\inf_{v\in\mathcal{N}}\int_{\RN}\abs{\nabla v}^2+V_{\mu, \nu}\abs{v}^2\\
&\le\frac{\alpha}{2(N+\alpha)}\inf\left\{\int_{\RN}|\nabla u_s|^2+V_{\mu, \nu}|u_s|^2: G(u)=1,\,\ u\in H^1(\RN)\right\}\\
&=\frac{\alpha}{2(N+\alpha)}\inf\left\{\left(\int_{\RN}\abs{\nabla u}^2+V_{\mu, \nu}\abs{u}^2\right)^{\frac{N+\alpha}{\alpha}}: G(u)=1,\,\ u\in H^1(\RN)\right\}\\
&=\frac{\alpha}{2(N+\alpha)}c_{\mu, \nu}^{\frac{N+\alpha}{\alpha}}.\qedhere
 \end{split}
\end{equation*}
\end{proof}
\begin{lemma}\label{critical0}There exists \(u \in H^1 (\R^N)\) such that 
\[
\int_{\RN}\abs{\nabla u}^2+V_{\mu, \nu}\abs{u}^2
\]
and \(G (u) = 0\).
In particular, $c_{\mu^{\nu}, \nu}=0$.
\end{lemma}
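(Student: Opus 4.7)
The plan is to produce the required function as a minimizer for the best constant $\mu^\nu$, from which the identity $c_{\mu^\nu,\nu}=0$ falls out by scaling. The argument will proceed in three stages: attainment of $\mu^\nu$, compactness of the relevant weighted embedding, and identification of the Euler--Lagrange relation, followed by a two-sided bound for $c_{\mu^\nu,\nu}$.

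First I would take a minimizing sequence $(u_n)\subset H^1(\R^N)$ for the ratio defining $\mu^\nu$, normalized by $\int_{\R^N}|u_n|^2/(\nu^2+|x|^2)\,\ud x = 1$ and with $\int_{\R^N}|\nabla u_n|^2+|u_n|^2 \to \mu^\nu$. Boundedness in $H^1(\R^N)$ then gives a weak limit $u_0$. The key point is to upgrade this to strong convergence in $L^2(\R^N,(\nu^2+|x|^2)^{-1}\ud x)$, so that the normalization passes to the limit and $u_0 \neq 0$ actually realizes $\mu^\nu$.

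Compactness of the embedding $H^1(\R^N)\hookrightarrow L^2(\R^N,(\nu^2+|x|^2)^{-1}\ud x)$ is the heart of the matter, and I regard it as the main obstacle. I would prove it by splitting the integral $\int |u_n - u_0|^2/(\nu^2+|x|^2)\,\ud x$ at a radius $R$: on $B_R$ the weight is bounded by $\nu^{-2}$, so Rellich--Kondrachov yields strong $L^2$ convergence; on $\R^N \setminus B_R$ the weight is bounded by $R^{-2}$, hence the tail is controlled by $R^{-2}\|u_n - u_0\|_{L^2}^2 = O(R^{-2})$ uniformly in $n$. Sending first $n \to \infty$ and then $R \to \infty$ gives the desired strong convergence. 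Note that this step uses both features of the weight: its boundedness near the origin (thanks to $\nu>0$, unlike the plain Hardy weight) and its quadratic decay at infinity.

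With $u_0 \in H^1(\R^N) \setminus \{0\}$ attaining $\mu^\nu$, the Euler--Lagrange equation $-\Delta u_0 + u_0 = \mu^\nu (\nu^2+|x|^2)^{-1} u_0$ holds; testing it against $u_0$ gives $\int_{\R^N} |\nabla u_0|^2 + V_{\mu^\nu,\nu}|u_0|^2 = 0$, while $G(u_0)>0$ follows from the strict positivity of the Hardy--Littlewood--Sobolev quadratic form on nontrivial functions (Lemma~\ref{hls}). This is the function required by the statement. For the ``in particular'' conclusion, a rescaling $u_0 \mapsto t u_0$ with $t = G(u_0)^{-N/(2(N+\alpha))}$ produces an admissible competitor for $c_{\mu^\nu,\nu}$ whose quadratic form is still zero, giving $c_{\mu^\nu,\nu}\le 0$. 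The matching lower bound $c_{\mu^\nu,\nu} \ge 0$ is immediate from the very definition of $\mu^\nu$: for every admissible $u$, $\int |\nabla u|^2 + V_{\mu^\nu,\nu}|u|^2 = \int |\nabla u|^2 + |u|^2 - \mu^\nu \int |u|^2/(\nu^2+|x|^2) \ge 0$. Combining the two inequalities yields $c_{\mu^\nu,\nu}=0$.
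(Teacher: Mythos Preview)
Your proposal is correct and follows essentially the same route as the paper: show that $\mu^\nu$ is attained by proving compactness of the embedding $H^1(\R^N)\hookrightarrow L^2(\R^N,(\nu^2+|x|^2)^{-1}\ud x)$, then use the minimizer (after rescaling) as the required function. The paper is terser---it just invokes Rellich's theorem together with the decay of the weight, and reads off $\int|\nabla u_0|^2+V_{\mu^\nu,\nu}|u_0|^2=0$ directly from the minimizing property and the normalization rather than passing through the Euler--Lagrange equation---but the substance is the same.
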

\begin{proof}
Obviously, we have $c_{\mu^{\nu}, \nu}\ge 0$. By the definition of $\mu^{\nu}$, there exists a sequence $\{u_n\}_{n \in \mathbb{N}}$ in $H^1(\RN)$ such that as $n\to\infty$,
$$
\int_{\RN}\abs{\nabla u_n}^2+ \abs{u_n}^2\to \mu^{\nu},\quad \text{ and} \quad \int_{\RN}\frac{\abs{u_n (x)}^2}{\nu^2 + \abs{x}^2}\,\ud x=1.
$$
Up to a subsequence, there exists $u_0\in H^1(\RN)$ such that $u_n\rightharpoonup u_0$ weakly in $H^1(\RN)$ and almost everywhere in $\RN$, as $n\to\infty$. Noting that $1/(\nu^2 + \abs{x}^2)\to 0$, as $\abs{x}\to\infty$, we have by Rellich's compactnes theorem
\[
\int_{\RN}\frac{|u_0(x)|^2}{\nu^2 + \abs{x}^2}\,\ud x
= \lim_{n \to \infty} \int_{\RN}\frac{|u_n (x)|^2}{\nu^2 + \abs{x}^2}\,\ud x
=1
\]
and by the weak lower-semicontinuity of the norm, 
\[
\int_{\RN}\abs{\nabla u_0}^2+|u_0|^2\le \lim_{n \to \infty} \int_{\RN}\abs{\nabla u_n}^2+|u_n|^2 = \mu^{\nu}, 
\]
 which implies that $u_0$ is a minimizer of $\mu^{\nu}$. Therefore, $c_{\mu^{\nu}, \nu}=0$. Since \(u_0 \ne 0\), we reach the conclusion by a suitable scaling of \(u_0\).
\end{proof}

%\subsection{Proof of Theorem \ref{Theorem 1}}
\begin{proof}[Proof of Theorem~\ref{Theorem 1}]
Since the Choquard equation \eqref{q1} does not admit any nontrivial solution if $\mu<\frac{(N-2)^2}{4}$, it follows that $c_{\mu, \nu}=c_\infty$ if $\mu<\frac{(N-2)^2}{4}$. Noting that $c_{\mu^{\nu}, \nu}=0$ and $c_{\mu, \nu}=c_\infty$ if $\mu<\frac{(N-2)^2}{4}$, by Lemma~\ref{monotonicity}, there exists $\mu_{\nu}\in[\frac{(N-2)^2}{4},\mu^{\nu})$ such that
$$
c_{\mu, \nu}=c_\infty \text{ if $0<\mu<\mu_{\nu}$}
\qquad \text{ and } \qquad c_{\mu, \nu}<c_\infty \text{ if $\mu_{\nu}<\mu<\mu^{\nu}.$ }
$$
Next we show that $c_{\mu, \nu}>0$ if $\mu_{\nu}<\mu<\mu^{\nu}$. Indeed, assume by contradiction that for some $\mu_{\nu}<\mu<\mu^{\nu}$, one has $c_{\mu, \nu}=0$.
Let $\{v_n\}_{n \in \mathbb{N}}$ be a minimizing sequence for $c_{\mu,\nu}$, then
\begin{equation*}
 \begin{split}
  0&=\lim_{n\to \infty}\int_{\RN}\abs{\nabla v_n}^2+V_{\mu, \nu}\abs{v_n}^2\\
&\ge\liminf_{n\to \infty}\int_{\RN}\abs{\nabla v_n}^2+V_{\mu^{\nu}, \nu}\abs{v_n}^2\ge c_{\mu^{\nu}, \nu}=0,
 \end{split}
\end{equation*}
which implies that 
\[
\lim_{n \to \infty} \int_{\RN}\frac{\abs{v_n (x)}^2}{\nu^2 + \abs{x}^2}\,\ud x = 0,
\]
and then $v_n\to 0$ strongly in $H^1(\RN)$, as $n\to\infty$, which contradicts the fact $G(v_n)=1$. Then for $\mu_{\nu}<\mu<\mu^{\nu}$, by virtue of \cite[Theorem 3]{MV5}, the level $c_{\mu, \nu}$ is achieved and $m_{\mu, \nu}$ as well.
\medbreak
Finally, to conclude the proof, it remains to proveing that for any $(N-2)^2/4<\mu<\mu_{\nu}$, $c_{\mu, \nu}$ cannot be achieved.  In fact, otherwise by the equality case in Lemma~\ref{monotonicity}, we would have
\(c_{\lambda, \nu} < c_\infty\) if \(\lambda > \mu\), contradicting the very definition of \(\mu_\nu\).
\end{proof}

%%%%%%%%%%%%%%%%%%%%%%%%%%%%%%%%%%%%%%%%%%%%%%%%%%%%%%%%%%%%%%%%%%%%%%%%%%%%%%%%%%%%%%%%%%%%%%%%%
%%%%%%%%%%%%%%%%%%%%%%%%%%%%%%%%%%%%%%%%%%%%%%%%%%%%%%%%%%%%%%%%%%%%%%%%%%%%%%%%%%%%%%%%%%%%%%%%%
%%%%%%%%%%%%%%%%%%%%%%%%%%%%%%%%%%%%%%%%%%%%%%%%%%%%%%%%%%%%%%%%%%%%%%%%%%%%%%%%%%%%%%%%%%%%%%%%%
%%%%%%%%%%%%%%%%%%%%%%%%%%%%%%%%%%%%%%%%%%%%%%%%%%%%%%%%%%%%%%%%%%%%%%%%%%%%%%%%%%%%%%%%%%%%%%%%%
%%%%%%%%%%%%%%%%%%%%%%%%%%%%%%%%%%%%%%%%%%%%%%%%%%%%%%%%%%%%%%%%%%%%%%%%%%%%%%%%%%%%%%%%%%%%%%%%%
%%%%%%%%%%%%%%%%%%%%%%%%%%%%%%%%%%%%%%%%%%%%%%%%%%%%%%%%%%%%%%%%%%%%%%%%%%%%%%%%%%%%%%%%%%%%%%%%%

\section{Proof of Theorem~\ref{Theorem 2}}
\label{sectionProofTheorem2}
For $\mu>\mu^{\nu}$, the set $\mathcal{N}_{\mu, \nu}$ is still a $C^1$-manifold of codimension 1. However, in contrast to the case $\mu<\mu^{\nu}$, the manifold $\mathcal{N}_{\mu, \nu}$ is no longer regular for $\mu>\mu^{\nu}$. Precisely, we next show that for $\mu>\mu^{\nu}$, $0$ is adherent to $\mathcal{N}_{\mu, \nu}$, which in turn implies $\inf_{u\in\mathcal{N}_{\mu, \nu}}J_{\mu, \nu} (u)\le0$. This is a main obstruction in proving the existence of a nontrivial critical point.  
\begin{lemma}\label{lemma1} 
If $\mu>\mu^{\nu}$, then 
$$
\inf_{u\in\mathcal{N}_{\mu, \nu}}J_{\mu, \nu} (u)=0.
$$
\end{lemma}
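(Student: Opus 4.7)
I would prove the identity in two steps: the easy lower bound $\inf_{\mathcal{N}_{\mu,\nu}} J_{\mu,\nu} \ge 0$, and the construction of a Nehari-constrained sequence along which $J_{\mu,\nu}$ tends to $0$. The lower bound is immediate: on $\mathcal{N}_{\mu,\nu}$ the Nehari identity forces the quadratic and nonlocal parts of $J_{\mu,\nu}$ to coincide, so
\[
J_{\mu,\nu}(u) = \frac{\alpha}{2(N+\alpha)}\int_{\RN}(I_\alpha\ast\abs{u}^{(N+\alpha)/N})\abs{u}^{(N+\alpha)/N},
\]
which is strictly positive by the Hardy--Littlewood--Sobolev inequality.

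For the matching upper bound I would superimpose two well separated bumps. Since $\mu > \mu^\nu$, the definition of $\mu^\nu$ as an infimum together with the density of $C_c^\infty(\RN)$ in $H^1(\RN)$ and the $H^1$-continuity of the quadratic form (note that $V_{\mu,\nu}$ is bounded) furnishes some $\phi \in C_c^\infty(\RN)$ with $A := \int_{\RN}\abs{\nabla\phi}^2 + V_{\mu,\nu}\abs{\phi}^2 < 0$. Fix any nontrivial $\psi \in C_c^\infty(\RN)$, set $\psi_R(x) := \psi(x - Re_1)$, and observe that for $R$ large the supports of $\phi$ and $\psi_R$ are disjoint, while $V_{\mu,\nu}(x)\to 1$ as $\abs{x}\to\infty$ gives $B_R := \int_{\RN}\abs{\nabla\psi_R}^2 + V_{\mu,\nu}\abs{\psi_R}^2 \to B_\infty := \int_{\RN}\abs{\nabla\psi}^2 + \abs{\psi}^2 > 0$.

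For the candidate $u = s\phi + t\psi_R$ with $s,t > 0$, disjointness of the supports yields $\int_{\RN}\abs{\nabla u}^2 + V_{\mu,\nu}\abs{u}^2 = s^2 A + t^2 B_R$, and, writing $p = (N+\alpha)/N$ and using $\abs{u}^p = s^p\abs{\phi}^p + t^p\abs{\psi_R}^p$ pointwise,
\[
\int_{\RN}(I_\alpha\ast\abs{u}^p)\abs{u}^p = s^{2p} P + 2s^p t^p K_R + t^{2p} Q,
\]
where $P, Q > 0$ are independent of $R$ and $K_R := \int_{\RN}(I_\alpha\ast\abs{\phi}^p)\abs{\psi_R}^p \le C R^{\alpha - N} \to 0$ because $\abs{x - y} \ge R/2$ on the relevant product of supports once $R$ is large. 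Setting $t = \lambda s$, the Nehari constraint reduces to the single scalar equation
\[
s^{2(p-1)} = \frac{A + \lambda^2 B_R}{P + 2\lambda^p K_R + \lambda^{2p} Q},
\]
which admits a positive solution precisely when $A + \lambda^2 B_R > 0$.

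I would then choose $\lambda_n^2 := \abs{A}/B_\infty + 1/n$ and, for each $n$, pick $R_n$ so large that $K_{R_n} < 1/n$ and $\abs{A}(1 - B_{R_n}/B_\infty) < B_\infty/(2n)$. A short computation then gives $A + \lambda_n^2 B_{R_n} \in (0,\,B_\infty/n)$, so the denominator above stays bounded away from $0$, the corresponding $s_n$ tends to $0$, and
\[
u_n := s_n\phi + \lambda_n s_n \psi_{R_n} \in \mathcal{N}_{\mu,\nu}, \qquad J_{\mu,\nu}(u_n) = \frac{\alpha}{2(N+\alpha)} s_n^2 (A + \lambda_n^2 B_{R_n}) \to 0.
\]
The only delicate point is this coupled selection: the prescribed margin $\lambda_n^2 - \abs{A}/B_\infty$ must dominate the defect $1 - B_{R_n}/B_\infty$ so that $A + \lambda_n^2 B_{R_n}$ stays strictly positive; this is arranged simply by letting $R_n$ depend on $\lambda_n$.
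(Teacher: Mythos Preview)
Your proof is correct. Both your argument and the paper's share the same skeleton: on the Nehari manifold the energy reduces to the (nonnegative) nonlocal term, and the upper bound is obtained by combining a function with negative quadratic form (available since $\mu>\mu^\nu$) with one having positive quadratic form so as to produce Nehari elements whose quadratic form is small and positive. The difference is in how that combination is carried out. The paper takes a linear homotopy $\gamma(t)=t\tilde u_0+(1-t)\tilde u_1$ between normalized representatives with quadratic forms of opposite signs, renormalizes so that $G(\tilde\gamma(t))=1$, and appeals to the intermediate value theorem to find $t_n$ with $\int|\nabla\tilde\gamma(t_n)|^2+V_{\mu,\nu}|\tilde\gamma(t_n)|^2\to 0^+$; this is short and requires no quantitative control. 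Your argument instead uses disjoint supports to decouple the two bumps exactly, reducing the Nehari condition to an explicit scalar equation in $(s,\lambda,R)$ and then tuning parameters directly. This is more constructive and makes the rate $J_{\mu,\nu}(u_n)=O(1/n)$ visible, at the cost of a bit more bookkeeping (the coupled choice of $\lambda_n$ and $R_n$). Either route suffices; the paper's is slightly more economical, yours is more explicit.
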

\begin{proof}
For $\mu>\mu^{\nu}$, we have
\begin{equation}
\label{yong} 
\begin{split}
\inf_{u\in\mathcal{N}_{\mu, \nu}}J_{\mu, \nu} (u)&=\frac{\alpha}{2(N+\alpha)}\inf_{u\in\mathcal{N}_{\mu, \nu}}\int_{\RN}\abs{\nabla u}^2+V_{\mu, \nu}\abs{u}^2\\
&=\frac{\alpha}{2(N+\alpha)}\inf_{u\in\mathcal{N}_{\mu, \nu}}\left[\frac{\int_{\RN}\abs{\nabla u}^2+V_{\mu, \nu}\abs{u}^2}{\left(\int_{\RN} (I_\alpha\ast \abs{u}^{\frac{N+\alpha}{N}})\abs{u}^{\frac{N+\alpha}{N}}\right)^{\frac{N}{N+\alpha}}}\right]^{\frac{N+\alpha}{\alpha}}\\
&=:\frac{\alpha}{2(N+\alpha)}d_\mu^{\frac{N+\alpha}{\alpha}}.
 \end{split}
\end{equation}
Next we show that actually $d_\mu=0$. Clearly,
\begin{multline*}
d_\mu=\inf\biggl\{\int_{\RN}\abs{\nabla u}^2+V_{\mu, \nu}\abs{u}^2: u\in H^1(\RN),
 \int_{\RN}\abs{\nabla u}^2+V_{\mu, \nu}\abs{u}^2>0\\ \text{ and } G(u)=1\biggr\}.
\end{multline*}
Let $u_0$ be the minimizer of $c_{\mu^{\nu}, \nu}$ obtained in Lemma~\ref{critical0}, then $\int_{\RN}\abs{\nabla u_0}^2+V_{\mu, \nu}|u_0|^2<0$ for any $\mu>\mu^{\nu}$. On the other hand, for any $\mu>\mu^{\nu}$, there exists $u_1\in H^1(\RN)$ such that
$$
\int_{\RN}|\nabla u_1|^2\ge(\mu+2)\int_{\RN}|u_1|^2>0,
$$
which implies $\int_{\RN}|\nabla u_1|^2+V_{\mu, \nu}|u_1|^2>0$. By scaling, there exist $\Tilde{u}_0,\Tilde{u}_1\in H^1(\RN)$ such that $G(\Tilde{u}_i)=1,\,\, i=0,1$ and
$$
\int_{\RN}|\nabla \Tilde{u}_0|^2+V_{\mu, \nu}|\Tilde{u}_0|^2<0,\,\,\int_{\RN}|\nabla \Tilde{u}_1|^2+V_{\mu, \nu}|\Tilde{u}_1|^2>0.
$$
Let $\g(t)=t\Tilde{u}_0+(1-t)\Tilde{u}_1$, $t\in[0,1]$, then $0<\inf_{t\in[0,1]}\|\g(t)\|\le\sup_{t\in[0,1]}\|\g(t)\|<+\infty$. Let $\Tilde{\g} (t)=\g(t)[G(\g(t))]^{-\frac{N}{2(N+\alpha)}}$, then $G(\Tilde{\g} (t))=1$ for $t\in[0,1]$ and
$$
\int_{\RN}|\nabla \Tilde{\g} (0)|^2+V_{\mu, \nu}|\Tilde{\g} (0)|^2>0,\,\,\, \int_{\RN}|\nabla \Tilde{\g} (1)|^2+V_{\mu, \nu}|\Tilde{\g} (1)|^2<0.
$$
By the continuity of $\Tilde{\g}$, there exists a sequence $\{\Tilde{\g} (t_n)\}_{n \in \mathbb{N}}$ such that $\int_{\RN}|\nabla \Tilde{\g} (t_n)|^2+V_{\mu, \nu}|\Tilde{\g} (t_n)|^2\to 0^+$, as $n\to\infty$, where $t_n\in[0,1]$. As a consequence, we get $d_{\mu}=0$ and there exists $\{\la_n\}$ with $\la_n\Tilde{\g} (t_n)\in\mathcal{N}_{\mu, \nu}$ and $\la_n\to 0$, as $n\to\infty$. 
\end{proof}
\bc
As a byproduct from the proof of Lemma~\ref{lemma1}, we have $c_{\mu,\nu}<0$ if $\mu>\mu^{\nu}$.
\ec
%\textbf{Ground state level.} Now, we show that the ground state level of \eqref{q1} is strictly positive for $\mu>\mu^{\nu}$ and except a countable set if the ground state level exists.

\subsection{Eigenvalues and eigenfunctions.} Consider the eigenvalue problem
\begin{equation*}
-\Delta u+u=\frac{\la}{\nu^2 + \abs{x}^2}u,\,\,u\in H^1(\RN).
\end{equation*}
Obverse that this problem is equivalent to the eigenvalue problem $T(u)=\frac{1}{\la}u$, where $T: H^1(\RN)\to H^1(\RN)$,
$$
T(u):=(-\Delta+I)^{-1}\circ\mathcal{K} (u),\,\, \mathcal{K} (u)=u/(\nu^2 + \abs{x}^2),\,\,u\in H^1(\RN).
$$
Since $1/(\nu^2 + \abs{x}^2)\to 0$, the linear operator  $\mathcal{K}: H^1(\RN)\to L^2(\RN)$ is compact. It is well known that $(-\Delta+I)^{-1}: L^2(\RN)\to H^1(\RN)$ is continuous. Then $T$ is compact and self-adjoint, which implies that there exist a sequence of eigenvalues $\{\la_n\}$ with finite multiplicity and such that
$$
0<\mu^{\nu}=\la_1<\la_2\le\cdots\le\la_n\le\cdots \to +\infty,\,\, n\to \infty
$$
and the associated normalized eigenfunctions $\{\vp_n\}$ satisfy for any $i,j\in\mathbb{N}$ and $i\not=j$,
$$
\int_{\RN}\nabla \vp_i\nabla \vp_j+\vp_i\vp_j=0,\,\,\int_{\RN}\frac{1}{\nu^2 + \abs{x}^2}|\vp_i|^2\,\ud x=1.
$$
Moreover, noting that for any $n$,
$$
\lim_{a\to 0^+}\sup_{x\in\RN}\int_{B_a(x)}\frac{1}{\abs{x - y}^{N-2}}\Bigl(1+\frac{\lambda_n}{(1+|y|^2)}\Bigr)\,\ud y=0,
$$
by virtue of \cite[Theorem C.2.5, C.3.4]{Simon}, there exist $C_n,\delta_n>0$ such that
$$
|\vp_n(x)| + |\nabla \vp_n(x)|\le C_n\exp{(-\delta_n\abs{x})},\,\, x\in\RN.
$$
From now on, we assume $\mu>\frac{N^2(N-2)}{4(N+1)}$ and 
$\mu\in[\la_n,\la_{n+1})$ if $N\ge4$ or $\mu\in(\la_n,\la_{n+1})$ if $N\ge3$. Let
$$
E^-={\rm span}\{\vp_1,\vp_2,\cdots,\vp_n\},\,\, E^+=\overline{{\rm span}\{\vp_{n+1},\vp_{n+2},\cdots\}},
$$
then $H^1(\RN)=E^-\oplus E^+$. 
\subsection{Energy estimates.} For any $\varepsilon > 0$, set
$$
u_{\varepsilon} (x)=\varepsilon^{\frac{N}{2}}U(\e x),
$$
where $U(x)=C\nu^{\frac{N}{2}} (\nu^2 + \abs{x}^2)^{-\frac{N}{2}}$ is a minimizer of $c_\infty$.
Following \cite{MV5}, we have
\begin{lemma}\label{lemma4}
Assume $\mu>\frac{N^2(N-2)}{4(N+1)}$. Then, for $\e>0$ small enough, we have 
$$
\int_{\RN}\abs{\nabla u_{\varepsilon}}^2 +V_{\mu, \nu}\abs{u_{\varepsilon}}^2>0,
$$
and
$$
\lim_{\varepsilon \to 0}\varepsilon^{-2}\int_{\RN}\Bigl[\abs{\nabla u_{\varepsilon} (x)}^2-\frac{\mu}{\nu^2 + \abs{x}^2}\abs{u_{\varepsilon} (x)}^2\Bigr]\,\ud x<0.
$$
\end{lemma}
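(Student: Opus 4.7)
I would reduce both statements to the explicit evaluation of a single scale-invariant Rayleigh quotient associated with the profile $U$. The first step is to make all scalings transparent via the change of variable $y=\varepsilon x$. From $u_{\varepsilon}(x)=\varepsilon^{N/2}U(\varepsilon x)$ one immediately reads
$$
\int_{\RN}\abs{u_\varepsilon}^2=\int_{\RN}\abs{U}^2,\qquad \int_{\RN}\abs{\nabla u_\varepsilon}^2=\varepsilon^2\int_{\RN}\abs{\nabla U}^2,
$$
$$
\int_{\RN}\frac{\abs{u_\varepsilon(x)}^2}{\nu^2+\abs{x}^2}\,\ud x=\varepsilon^2\int_{\RN}\frac{\abs{U(y)}^2}{\varepsilon^2\nu^2+\abs{y}^2}\,\ud y.
$$
Since $N\ge 3$, the function $\abs{U(y)}^2/\abs{y}^2$ is integrable on $\RN$ (it behaves as $\abs{y}^{-2}$ near the origin and as $\abs{y}^{-2N-2}$ at infinity), so dominated convergence gives $\int_{\RN}\abs{U(y)}^2/(\varepsilon^2\nu^2+\abs{y}^2)\,\ud y\to\int_{\RN}\abs{U}^2/\abs{y}^2\,\ud y$ as $\varepsilon\to 0$.

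The first assertion then falls out: the three identities above combine to
$$
\int_{\RN}\abs{\nabla u_\varepsilon}^2+V_{\mu,\nu}\abs{u_\varepsilon}^2=\int_{\RN}\abs{U}^2+O(\varepsilon^2),
$$
which is strictly positive for every sufficiently small $\varepsilon>0$ because $\int_{\RN}\abs{U}^2>0$. For the second assertion, the same identities yield
$$
\lim_{\varepsilon\to 0}\varepsilon^{-2}\int_{\RN}\Bigl[\abs{\nabla u_\varepsilon(x)}^2-\frac{\mu}{\nu^2+\abs{x}^2}\abs{u_\varepsilon(x)}^2\Bigr]\,\ud x=\int_{\RN}\abs{\nabla U}^2-\mu\int_{\RN}\frac{\abs{U}^2}{\abs{y}^2}\,\ud y,
$$
so it suffices to check that $\mu$ strictly exceeds the ratio $Q(U):=\int_{\RN}\abs{\nabla U}^2\big/\int_{\RN}\abs{U}^2\abs{y}^{-2}$.

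Because $Q$ is invariant under dilations and constant rescalings of $U$, I may take $\nu=1$, $U(y)=(1+\abs{y}^2)^{-N/2}$, so that $\abs{\nabla U(y)}^2=N^2\abs{y}^2(1+\abs{y}^2)^{-N-2}$. Both integrals are radial and evaluate by the Beta-function identity
$$
\int_0^\infty\frac{r^{2a-1}}{(1+r^2)^{a+b}}\,\ud r=\frac{\Gamma(a)\Gamma(b)}{2\Gamma(a+b)},
$$
and after using $\Gamma(N+2)=N(N+1)\Gamma(N)$ and $\Gamma(N/2)=\frac{N-2}{2}\Gamma((N-2)/2)$ the Gamma factors collapse to
$$
Q(U)=\frac{N(N-2)}{2(N+1)}.
$$
Since $\frac{N^2(N-2)}{4(N+1)}-\frac{N(N-2)}{2(N+1)}=\frac{N(N-2)^2}{4(N+1)}>0$ for $N\ge 3$, the hypothesis $\mu>N^2(N-2)/(4(N+1))$ is strictly stronger than $\mu>Q(U)$, which is exactly what was needed.

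The only nontrivial step is the Beta-function bookkeeping that produces the sharp value $Q(U)=N(N-2)/(2(N+1))$; everything else is standard, the mildest care being required for the dominated-convergence step at the origin in the borderline dimension $N=3$.
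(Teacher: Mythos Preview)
Your approach is essentially the same as the paper's: both rescale to fixed $U$, invoke dominated convergence for the potential term, and reduce everything to comparing $\mu$ with the Rayleigh quotient $Q(U)=\int_{\RN}\abs{\nabla U}^2\big/\int_{\RN}\abs{U}^2\abs{y}^{-2}$.

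However, your Beta-function bookkeeping is off by a factor of $N/2$. With $U(y)=(1+\abs{y}^2)^{-N/2}$ one has
\[
\int_0^\infty\frac{r^{N+1}}{(1+r^2)^{N+2}}\,\ud r=\frac{\Gamma(\tfrac{N+2}{2})^2}{2\,\Gamma(N+2)},\qquad
\int_0^\infty\frac{r^{N-3}}{(1+r^2)^{N}}\,\ud r=\frac{\Gamma(\tfrac{N-2}{2})\Gamma(\tfrac{N+2}{2})}{2\,\Gamma(N)},
\]
and since $\Gamma(\tfrac{N+2}{2})=\tfrac{N}{2}\cdot\tfrac{N-2}{2}\,\Gamma(\tfrac{N-2}{2})$ and $\Gamma(N+2)=N(N+1)\Gamma(N)$, the ratio gives
\[
Q(U)=N^2\cdot\frac{N(N-2)/4}{N(N+1)}=\frac{N^2(N-2)}{4(N+1)},
\]
not $\tfrac{N(N-2)}{2(N+1)}$. (You applied only $\Gamma(N/2)=\tfrac{N-2}{2}\Gamma(\tfrac{N-2}{2})$ and forgot the extra step $\Gamma(\tfrac{N+2}{2})=\tfrac{N}{2}\Gamma(\tfrac{N}{2})$.) The paper arrives at the same value by rewriting $\int\abs{\nabla U}^2$ as $\tfrac{N^2(N-2)}{4(N+1)}\int\abs{U}^2\abs{y}^{-2}$ inside the $\varepsilon$-dependent integral before passing to the limit.

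This slip is harmless for the lemma itself---the hypothesis $\mu>\tfrac{N^2(N-2)}{4(N+1)}$ is still exactly $\mu>Q(U)$, so your limit is negative as claimed---but your final sentence asserting that the hypothesis is \emph{strictly stronger} than needed is wrong: the threshold $\tfrac{N^2(N-2)}{4(N+1)}$ is sharp.
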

\begin{proof} Observe that
$$
\int_{\RN}\abs{\nabla u_{\varepsilon}}^2=\varepsilon^2\int_{\RN}|\nabla U|^2<+\infty,\,\,\int_{\RN} (I_\alpha\ast \abs{u_{\varepsilon}}^{\frac{N+\alpha}{N}})\abs{u_{\varepsilon}}^{\frac{N+\alpha}{N}}=1
$$
and
$$
\int_{\RN}\abs{u_{\varepsilon}}^2=c_\infty,\,\,\,\int_{\RN}\abs{\nabla u_{\varepsilon}}^2 +V_{\mu, \nu}\abs{u_{\varepsilon}}^2=c_\infty+\varepsilon^{2}\mathcal{I}_\mu(\varepsilon),
$$
where
$$
\mathcal{I}_\mu(\varepsilon)=\varepsilon^{-2}\int_{\RN}\Bigl[\abs{\nabla u_{\varepsilon} (x)}^2-\frac{\mu \abs{u_{\varepsilon} (x)}^2}{\nu^2 + \abs{x}^2}\Bigr]\,\ud x.
$$
For some $c>0$, by Lebesgue's dominated convergence theorem, if $\mu>\frac{N^2(N-2)}{4(N+1)}$ we obtain
\begin{equation*}
\begin{split}
\mathcal{I}_\mu(\varepsilon)&=\nu^{-2}\int_{\RN}\Bigl[\frac{N^2(N-2)}{4(N+1)}-\frac{\mu\abs{x}^2}{\varepsilon^2+ \abs{x}^2}\Bigr]\frac{c}{\abs{x}^2(1 + \abs{x}^2)^N}\,\ud x\\
&\to \nu^{-2}\Bigl[\frac{N^2(N-2)}{4(N+1)}-\mu\Bigr]\int_{\RN}\frac{c}{\abs{x}^2(1 + \abs{x}^2)^N}\,\ud x<0,\,\,\text{as $\varepsilon \to 0$.}\qedhere
\end{split}
\end{equation*}
\end{proof}
Let us recall for the convenience of the reader the following two elementary lemmas:
\begin{lemma}{\cite{MMVS}}\label{norm}
Define
$$
\nor{u}_\ast:=\left[\int_{\RN} (I_\alpha\ast \abs{u}^{\frac{N+\alpha}{N}})\abs{u}^{\frac{N+\alpha}{N}}\right]^{\frac{N}{2(N+\alpha)}},\,\, u\in L^2(\RN),
$$
then $\|\cdot\|_\ast$ is a norm in $L^2(\RN)$.
\end{lemma}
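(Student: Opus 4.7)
My plan is to use the semigroup property of Riesz potentials to rewrite the quantity $\|u\|_\ast$ as an $L^2$-norm after a positive convolution, and then reduce the three axioms of a norm to two nested applications of Minkowski's inequality. Write $p := (N+\alpha)/N$, so $p>1$. Since $\widehat{I_\beta}(\xi)$ is a positive constant multiple of $|\xi|^{-\beta}$, the Riesz composition identity $I_\alpha = I_{\alpha/2}\ast I_{\alpha/2}$ holds; combining this with the self-adjointness of convolution against an even kernel yields
\[
\|u\|_\ast^{2p} \;=\; \int_{\RN}(I_\alpha\ast |u|^p)\,|u|^p \;=\; \|I_{\alpha/2}\ast |u|^p\|_{L^2}^2,
\]
so that $\|u\|_\ast^p = \|I_{\alpha/2}\ast |u|^p\|_{L^2}$. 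Finiteness on $L^2(\RN)$ is then a direct consequence of the Hardy--Littlewood--Sobolev inequality (Lemma~\ref{hls}), while nondegeneracy $\|u\|_\ast=0\Rightarrow u=0$ and positive homogeneity $\|\lambda u\|_\ast = |\lambda|\,\|u\|_\ast$ are immediate from this representation together with the strict positivity of $I_{\alpha/2}$.

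The core of the proof is the triangle inequality. The plan is to begin from the pointwise bound $|u+v|^p \le (|u|+|v|)^p$, which by positivity of $I_{\alpha/2}$ upgrades to
\[
(I_{\alpha/2}\ast|u+v|^p)(x) \;\le\; (I_{\alpha/2}\ast(|u|+|v|)^p)(x).
\]
For each fixed $x\in\RN$, I would view $d\mu_x(y):=I_{\alpha/2}(x-y)\,dy$ as a positive measure on $\RN$ and apply Minkowski's inequality in $L^p(d\mu_x)$, which is legitimate because $p>1$; this peels off the outer exponent and gives the pointwise estimate
\[
\bigl(I_{\alpha/2}\ast(|u|+|v|)^p\bigr)(x)^{1/p} \;\le\; \bigl(I_{\alpha/2}\ast|u|^p\bigr)(x)^{1/p} + \bigl(I_{\alpha/2}\ast|v|^p\bigr)(x)^{1/p}.
\]
Taking the $L^{2p}(\RN)$-norm of both sides in $x$, applying the ordinary Minkowski inequality in $L^{2p}(\RN)$, and using the elementary identity $\|F^{1/p}\|_{L^{2p}} = \|F\|_{L^2}^{1/p}$ for $F\ge 0$, I arrive at
\[
\|I_{\alpha/2}\ast|u+v|^p\|_{L^2}^{1/p} \;\le\; \|I_{\alpha/2}\ast|u|^p\|_{L^2}^{1/p} + \|I_{\alpha/2}\ast|v|^p\|_{L^2}^{1/p},
\]
which in view of the representation above is precisely $\|u+v\|_\ast\le\|u\|_\ast+\|v\|_\ast$.

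The main obstacle is conceptual rather than technical: one cannot simply invoke a Cauchy--Schwarz-type argument for the positive definite bilinear form $(f,g)\mapsto\int(I_\alpha\ast f)g$, because the substitution $f=|u|^p$ is not linear in $u$ and the bound $(a+b)^p\le a^p+b^p$ fails for $p>1$, while $(a+b)^p \le 2^{p-1}(a^p+b^p)$ produces a constant larger than $1$. The mechanism that repairs this is the factorization $I_\alpha = I_{\alpha/2}\ast I_{\alpha/2}$, which splits the double integral into an outer $L^2$ and an inner convolution: Minkowski in $L^p$ is then applied at the inner level (where only the exponent $p$ appears) and Minkowski in $L^{2p}$ at the outer level, and together they convert the pointwise sandwich $|u+v|^p\le(|u|+|v|)^p$ into the sought-after triangle inequality.
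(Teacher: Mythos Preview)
Your argument is correct and follows essentially the same route as the paper: the semigroup identity $I_\alpha=I_{\alpha/2}\ast I_{\alpha/2}$ to write $\|u\|_\ast^p=\|I_{\alpha/2}\ast|u|^p\|_{L^2}$, a pointwise Minkowski inequality in $L^p(I_{\alpha/2}(x-\cdot)\,dy)$, and then Minkowski in $L^{2p}(\RN)$. Your presentation is more explicit about why the inner Minkowski step is legitimate and why a direct Cauchy--Schwarz attempt fails, but the mathematical content coincides with the paper's proof.
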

\begin{proof}
We need to prove the triangle inequality.
By the Hardy--Littlewood--Sobolev inequality, for any $u\in L^2(\RN)$ one has 
$$
\int_{\RN} (I_\alpha\ast \abs{u}^{\frac{N+\alpha}{N}})\abs{u}^{\frac{N+\alpha}{N}}\le\mathcal{C}_\alpha\nor{u}_2^{\frac{2(N+\alpha)}{N}}<+\infty.
$$
We also observe that by the semigroup property of Riesz potentials, we have
\[
 \nor{u}_* = \biggl(\int_{\RN} \abs{I_{\alpha/2} \ast \abs{u}^\frac{N + \alpha}{N}}^2 \biggr)^\frac{N}{2(N + \alpha)}.
\]
Let \(u, v \in L^2 (\RN)\).
By the Minkowski integral inequality, we have for almost every \(x \in \RN\),
\[
 \bigl(I_{\alpha/2} \ast \abs{u + v}^\frac{N + \alpha}{N}\bigr)^\frac{N}{N + \alpha}
 \le
 \bigl(I_{\alpha/2} \ast \abs{u}^\frac{N + \alpha}{N}\bigr)^\frac{N}{N + \alpha}
 + \bigl(I_{\alpha/2} \ast \abs{v}^\frac{N + \alpha}{N}\bigr)^\frac{N}{N + \alpha},
\]
and thus by integrating and using again the Minkowski inequality we get
\[
\begin{split}
  \nor{u + v}_* &\le \biggl(\int_{\RN} \bigl(\bigl(I_{\alpha/2} \ast \abs{u}^\frac{N + \alpha}{N}\bigr)^\frac{N}{N + \alpha}
 + \bigl(I_{\alpha/2} \ast \abs{v}^\frac{N + \alpha}{N}\bigr)^\frac{N}{N + \alpha} \bigr)^\frac{2(N + \alpha)}{N}\bigr) \biggr)^\frac{N}{2(N + \alpha)}\\
 &\le \biggl(\int_{\RN}\abs{I_{\alpha/2}\ast \abs{u}^{\frac{N+\alpha}{N}})}^2\biggr)^{\frac{N}{2(N+\alpha)}}
 + \biggl(\int_{\RN}\abs{I_{\alpha/2}\ast \abs{v}^{\frac{N+\alpha}{N}}}^2 \biggr)^{\frac{N}{2(N+\alpha)}}\\
 &= \nor{u}_* + \nor{v}_*.\qedhere
\end{split}
\]
\end{proof}
\begin{lemma}
\label{pointInequality}
For $p\in(1,2)$ and \(a, b \in \R\), we have
\[
 |\abs{a}^p + \abs{b}^p -\abs{a + b}^p| \le 2^{2 - p} p\, \abs{a}\, \abs{b}^{p - 1}.
\]
\end{lemma}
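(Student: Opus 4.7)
The plan is to derive the inequality from an integral representation of $\abs{a+b}^p - \abs{a}^p - \abs{b}^p$ together with a sharp H\"older estimate on the derivative of $x \mapsto \abs{x}^p$. For $p \in (1,2)$ the function $f(x) = \abs{x}^p$ lies in $C^1(\R)$ with derivative $f'(x) = p\,\psi(x)$, where $\psi(x) := \mathrm{sgn}(x)\,\abs{x}^{p-1}$, so applying the fundamental theorem of calculus on $[0,1]$ to $t \mapsto \abs{b+ta}^p$ and to $t \mapsto \abs{ta}^p$ yields
\[
\abs{a+b}^p - \abs{b}^p = pa\int_0^1 \psi(b+ta)\,\ud t, \qquad \abs{a}^p = pa\int_0^1 \psi(ta)\,\ud t,
\]
and subtracting gives the identity
\[
\abs{a+b}^p - \abs{a}^p - \abs{b}^p = p\,a\int_0^1 \bigl[\psi(b+ta) - \psi(ta)\bigr]\,\ud t.
\]

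The central auxiliary fact I would establish is the sharp H\"older bound
\[
\abs{\psi(x) - \psi(y)} \le 2^{2-p}\,\abs{x-y}^{p-1} \qquad \text{for every } x, y \in \R.
\]
This is proved by splitting into two cases. When $xy \ge 0$, one reduces to $x,y \ge 0$ and invokes the classical subadditivity $\abs{x^{p-1} - y^{p-1}} \le \abs{x-y}^{p-1}$, a direct consequence of the concavity of $t \mapsto t^{p-1}$ on $[0,\infty)$ for $p - 1 \in (0,1)$. When $xy < 0$, say $x > 0 > y$, one has $\abs{\psi(x) - \psi(y)} = x^{p-1} + (-y)^{p-1}$ and $\abs{x - y} = x + (-y)$, so the claim reduces to
\[
u^{p-1} + v^{p-1} \le 2^{2-p}\,(u+v)^{p-1} \qquad \text{for } u, v \ge 0.
\]
Fixing $s = u+v$ and writing $u = s/2 + r$, $v = s/2 - r$, the map $r \mapsto (s/2+r)^{p-1} + (s/2-r)^{p-1}$ has derivative with sign opposite to $r$, since $t \mapsto t^{p-2}$ is decreasing; hence it attains its maximum at $r = 0$ with value $2\,(s/2)^{p-1} = 2^{2-p}\,s^{p-1}$, and the equality case is precisely $u = v$.

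Combining the integral identity with the H\"older estimate and using $\abs{(b+ta) - ta} = \abs{b}$ then gives
\[
\bigabs{\abs{a+b}^p - \abs{a}^p - \abs{b}^p} \le p\,\abs{a}\int_0^1 \abs{\psi(b+ta) - \psi(ta)}\,\ud t \le 2^{2-p}\,p\,\abs{a}\,\abs{b}^{p-1},
\]
which is the desired estimate. The only delicate step in the plan is pinning down the sharp constant $2^{2-p}$ in the H\"older bound for $\psi$: the factor $2^{2-p}$ appears only across a sign change and is saturated at $u = v$ (equivalently $a = -b$), which is exactly the configuration where the original pointwise inequality is asymptotically tight. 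Once this is in place, the fundamental theorem of calculus delivers the final bound mechanically.
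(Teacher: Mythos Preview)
Your proof is correct and follows the same route as the paper: both derive the integral identity via the fundamental theorem of calculus applied to $t\mapsto \abs{ta+b}^p$ and $t\mapsto \abs{ta}^p$, and then bound the integrand by the H\"older estimate $\abs{\psi(x)-\psi(y)}\le 2^{2-p}\abs{x-y}^{p-1}$ for $\psi(x)=\mathrm{sgn}(x)\abs{x}^{p-1}$. The paper merely asserts this bound by observing that the integrand is maximal at the symmetric configuration, whereas you supply the full case analysis (same sign versus opposite sign), so your argument is a fleshed-out version of theirs.
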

\begin{proof}
We have
\[
 \abs{a}^p + \abs{b}^p - \abs{a + b}^p
 =p a \int_0^1 \abs{t a}^{p - 2} t a - \abs{t a + b}^{p - 2} (t a + b)\,\ud t.
\]
We observe that the integrand is maximal when \(ta = \frac{b}{2}\), and thus
\[
 \bigabs{\abs{t a}^{p - 2} t a - \abs{t a + b}^{p - 2} (t a + b)}
 \le 2^{2 - p} \abs{b}^{p - 1}.\qedhere
\]
\end{proof}

\begin{lemma}\label{estimate}
Let $\mu>\frac{N^2(N-2)}{4(N+1)}$. If $\mu\in[\la_n,\la_{n+1})$ when $N=4$ or $\mu\in(\la_n,\la_{n+1})$ when $N = 3$, we have for $\varepsilon > 0$ small enough,
$$
\sup_{w\in\hat{E} (u_{\varepsilon})}J_{\mu, \nu} (w)<\frac{\alpha}{2(N+\alpha)}c_\infty^{\frac{N+\alpha}{\alpha}},
$$
where $\hat{E} (u_{\varepsilon}):=\{w\in H^1(\RN): w=tu_{\varepsilon} +v,\,\,t\ge 0,\,\,v\in E^-\}$.
\end{lemma}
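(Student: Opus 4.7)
The plan is to decompose $w = tu_\varepsilon + v \in \hat{E}(u_\varepsilon)$ and estimate $J_{\mu,\nu}(w)$ as a perturbation of $J_{\mu,\nu}$ restricted to the ray $\{tu_\varepsilon : t \ge 0\}$. Repeating the Nehari-type computation of Lemma~\ref{lemma2} on this ray yields
\[
\sup_{t \ge 0} J_{\mu,\nu}(tu_\varepsilon) = \frac{\alpha}{2(N+\alpha)}\bigl[A(u_\varepsilon)\bigr]^{(N+\alpha)/\alpha},
\]
where $A(u) := \int_{\RN}\abs{\nabla u}^2 + V_{\mu,\nu}\abs{u}^2$. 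By Lemma~\ref{lemma4}, $A(u_\varepsilon) = c_\infty + \varepsilon^2 \mathcal{I}_\mu(\varepsilon)$ with $\mathcal{I}_\mu(\varepsilon)$ strictly negative and bounded away from $0$, so the ray contribution already sits below $\frac{\alpha}{2(N+\alpha)}c_\infty^{(N+\alpha)/\alpha}$ by a definite $O(\varepsilon^2)$ amount. The task is therefore to show that adding $v \in E^-$ produces only vanishing corrections modulo this gain.

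As a preliminary reduction, since $\|\cdot\|_*$ from Lemma~\ref{norm} is a norm and $2(N+\alpha)/N > 2$, the functional $J_{\mu,\nu}(tu_\varepsilon + v) \to -\infty$ as $|t| + \|v\| \to \infty$ in the finite-dimensional space $\R \times E^-$, so the supremum is attained at some bounded pair $(t_\varepsilon, v_\varepsilon)$. I then expand
\[
A(tu_\varepsilon + v) = t^2 A(u_\varepsilon) + 2t\langle u_\varepsilon, v\rangle_{\mu,\nu} + A(v).
\]
Writing $v = \sum_{i=1}^n c_i \vp_i$, the eigenvalue relation $(-\Delta + V_{\mu,\nu})\vp_i = (\la_i - \mu)\vp_i/(\nu^2+\abs{x}^2)$ gives $A(v) = \sum (\la_i - \mu)\abs{c_i}^2 \le 0$ (strictly negative when $\mu > \la_n$) and, by integration by parts,
\[
\langle u_\varepsilon, v\rangle_{\mu,\nu} = \sum_{i=1}^n c_i (\la_i - \mu) \int_{\RN} \frac{u_\varepsilon\, \vp_i}{\nu^2+\abs{x}^2}\,\ud x = O(\varepsilon^{N/2})\|v\|,
\]
where the estimate uses the pointwise bound $\abs{u_\varepsilon(x)} \le \varepsilon^{N/2}U(0)$ and the exponential decay of each $\vp_i$.

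For the nonlocal term I apply Lemma~\ref{pointInequality} with $p = (N+\alpha)/N \in (1,2)$ to write pointwise $\abs{tu_\varepsilon + v}^p = \abs{tu_\varepsilon}^p + \abs{v}^p + \eta$ with $\abs{\eta} \le 2^{2-p}p\,\abs{v}\abs{tu_\varepsilon}^{p-1}$. Expanding $B(tu_\varepsilon + v) := \int_{\RN}(I_\alpha\ast\abs{tu_\varepsilon+v}^p)\abs{tu_\varepsilon+v}^p$ bilinearly, the positive-definiteness of the Riesz kernel makes $\int(I_\alpha\ast\eta)\eta \ge 0$, and the non-negativity of $\int(I_\alpha\ast\abs{tu_\varepsilon}^p)\abs{v}^p$ lets me discard it, yielding
\[
B(tu_\varepsilon + v) \ge t^{2p}B(u_\varepsilon) + B(v) - R_\varepsilon,
\]
where $R_\varepsilon$ collects the cross integrals containing $\eta$ and is controlled, by Lemma~\ref{hls} together with the smallness of $\abs{u_\varepsilon}^{p-1}$ on the effective support of $v$, by $\varepsilon^{\alpha/2}$ times a polynomial in $\|v\|$. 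Combining gives
\[
J_{\mu,\nu}(tu_\varepsilon + v) \le J_{\mu,\nu}(tu_\varepsilon) + J_{\mu,\nu}(v) + O(\varepsilon^{N/2})\|v\| + \tfrac{N}{2(N+\alpha)}R_\varepsilon,
\]
and since $J_{\mu,\nu}(v) \le 0$ on $E^-$, applying Young's inequality to absorb the linear-in-$\|v\|$ cross terms against $-A(v)$ (when $\mu > \la_n$) or against $-B(v)$ reduces the total correction to an error dominated by the $O(\varepsilon^2)$ gain.

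The main obstacle is the balance of orders in low dimension. For $N \ge 5$ the errors $\varepsilon^{N/2}$ and $\varepsilon^{\alpha/2}$ are obviously dominated by $\varepsilon^2$. For $N \ge 4$ and $\mu \in [\la_n, \la_{n+1})$ one combines the strict positivity of the superquadratic $-B(v)$ term with the allowance that $A(v)$ may degenerate on a $\la_n$-eigenspace. The genuine difficulty is $N = 3$: the cross error from $\int(I_\alpha \ast \abs{v}^p)\eta$ is of order $\varepsilon^{\alpha/2}$, which for $\alpha < 3$ is a worse rate than the gain $\varepsilon^2$, and the $\varepsilon^{3/2}$ contribution from $\langle u_\varepsilon, v\rangle_{\mu,\nu}$ is likewise critical. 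This is precisely why the statement excludes the resonant case $\mu = \la_n$: the strict coercivity $A(v) \le (\la_n - \mu)\sum\abs{c_i}^2$ is then available to absorb these linear-in-$\|v\|$ errors by Young into an $o(\varepsilon^2)$ term, and the restriction $\alpha > 3/2$ ensures that $R_\varepsilon$ enters at a compatible rate. Making this absorption precise and checking that the maximizer $v_\varepsilon$ satisfies $\|v_\varepsilon\| = o(1)$ with the correct quantitative rate is the technical heart of the proof.
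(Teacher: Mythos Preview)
Your overall strategy matches the paper's---decompose $J_{\mu,\nu}(tu_\varepsilon+v)$, extract the $O(\varepsilon^2)$ gain from Lemma~\ref{lemma4}, and absorb the cross terms by Young's inequality---but there is a genuine gap in your application of Lemma~\ref{pointInequality}. You take $a=v$, $b=tu_\varepsilon$ and obtain $\abs{\eta}\le C\,\abs{v}\,\abs{tu_\varepsilon}^{\alpha/N}$, hence an $\varepsilon^{\alpha/2}$ prefactor on $R_\varepsilon$ via the pointwise bound $\abs{u_\varepsilon}^{\alpha/N}\le C\varepsilon^{\alpha/2}$. The paper instead takes $a=tu_\varepsilon$, $b=v$, so that $\abs{\eta}\le C\,\abs{tu_\varepsilon}\,\abs{v}^{\alpha/N}$ and the cross terms carry the stronger prefactor $\varepsilon^{N/2}$. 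Because the inequality of Lemma~\ref{pointInequality} is asymmetric in $a,b$ and $\alpha<N$, your bound is strictly weaker, and the loss is fatal: the dominant cross integral in your expansion is $\int(I_\alpha\ast\abs{tu_\varepsilon}^p)\eta=O\bigl(t^{(N+2\alpha)/N}\varepsilon^{\alpha/2}\|v\|\bigr)$, and absorbing the linear factor $\|v\|$ against $-c\|v\|^{2(N+\alpha)/N}$ by Young leaves a residual of order $\varepsilon^{\alpha(N+\alpha)/(N+2\alpha)}$. This exponent exceeds $2$ only when $\alpha^2+\alpha(N-4)-2N>0$, which fails for small $\alpha$ even in dimension $N\ge4$ (for instance $N=4$, $\alpha=1$ gives exponent $5/6$). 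Since the statement for $N\ge4$ allows the resonant case $\mu=\lambda_n$, you cannot fall back on quadratic absorption against $-A(v)$ here.

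With the paper's assignment the same Young step produces the residual $\varepsilon^{N(N+\alpha)/(2N+\alpha)}$, and $N(N+\alpha)>2(2N+\alpha)$ holds for every $\alpha\in(0,N)$ once $N\ge4$; its failure at $N=3$ is exactly what forces the extra hypothesis $\mu>\lambda_n$, so that one may absorb against $-C\|v\|^2$ instead and obtain the modified residual $\varepsilon^{N^2/(2N-\alpha)}$, which beats $\varepsilon^2$ precisely when $\alpha>3/2$. In particular, your claim that ``for $N\ge5$ the errors $\varepsilon^{N/2}$ and $\varepsilon^{\alpha/2}$ are obviously dominated by $\varepsilon^2$'' is not correct: the raw cross terms always carry $\|v\|$-factors and only become pure powers of $\varepsilon$ after the Young absorption, and it is exactly that exponent arithmetic that generates the dimensional threshold and the restriction on $\alpha$. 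Swap the roles of $tu_\varepsilon$ and $v$ in your use of Lemma~\ref{pointInequality} and carry out the three explicit Young estimates as in the paper; the rest of your outline then goes through.
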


\begin{proof}
\resetconstant
By Lemma~\ref{pointInequality}, we have for every $x\in\RN$, $\varepsilon > 0$, $v\in E^-$ and $t > 0$
%$$
%\abs{t u_{\varepsilon} + v}^{\frac{N+\alpha}{N}}
%\ge \abs{t u_{\varepsilon}}^{\frac{N+\alpha}{N}} + \abs{v}^{\frac{N+\alpha}{N}}
%- \Cl{cstxnyo} \abs{t u_{\varepsilon}} \abs{v}^{\frac{\alpha}{N}}.
%$$

$$
|\abs{t u_{\varepsilon} + v}^{\frac{N+\alpha}{N}}
-\abs{t u_{\varepsilon}}^{\frac{N+\alpha}{N}}-\abs{v}^{\frac{N+\alpha}{N}}|
\le \Cl{cstxnyo} \abs{t u_{\varepsilon}} \abs{v}^{\frac{\alpha}{N}},
$$
where $C_1=2^{(N-\al)/N} (N+\al)/N$.

From which  we get 
\begin{multline}
\label{esti}
\int_{\RN} (I_\alpha\ast \abs{t u_{\varepsilon} + v}^{\frac{N+\alpha}{N}})\abs{t u_{\varepsilon} + v}^{\frac{N+\alpha}{N}}\\
=t^{\frac{2(N+\alpha)}{N}} +2t^{\frac{N+\alpha}{N}}\int_{\RN} (I_\alpha\ast \abs{u_{\varepsilon}}^{\frac{N+\alpha}{N}})\abs{v}^{\frac{N+\alpha}{N}} +\int_{\RN} (I_\alpha\ast \abs{v}^{\frac{N+\alpha}{N}})\abs{v}^{\frac{N+\alpha}{N}}\\
- 2 \Cr{cstxnyo} \int_{\RN} (I_\alpha \ast (\abs{t u_\varepsilon}^\frac{N + \alpha}{N} + \abs{v}^\frac{N + \alpha}{N})) \abs{t u_\varepsilon} \abs{v}^\frac{\alpha}{N},
\end{multline}
By the Hardy--Littlewood--Sobolev inequality, we have
\begin{multline*}
\int_{\RN} (I_\alpha \ast \abs{t u_\varepsilon}^\frac{N + \alpha}{N} + \abs{v}^\frac{N + \alpha}{N}) \abs{t u_\varepsilon} \abs{v}^\frac{\alpha}{N}\\
\le \C (\nor{t u_\varepsilon}_L^2 + \nor{v}_{L^2} )^\frac{N + \alpha}{N} \Bigl(\int_{\RN}
\abs{u_\varepsilon}^\frac{2N}{N + \alpha}\abs{v}^\frac{2 \alpha}{N + \alpha} \Bigr)^\frac{N + \alpha}{2N}
\end{multline*}
Recalling that $|u_{\varepsilon} (x)|\le \C \varepsilon^{N/2}$ for any $x\in\RN$ and that all the norms in $E^-$ are equivalent (since ${\rm dim} (E^-)<+\infty$) and $E^- \subset L^{2 \alpha/(N + \alpha)} (\RN)$ for any $r>0$, we have
\[
 \int_{\RN} (I_\alpha \ast \abs{t u_\varepsilon}^\frac{N + \alpha}{N} + \abs{v}^\frac{N + \alpha}{N}) \abs{t u_\varepsilon} \abs{v}^\frac{\alpha}{N}
 \le \C (t^\frac{N + \alpha}{N} \nor{v}^{\frac{\alpha}{N}} + \nor{v}^\frac{N+ 2\alpha}{N}) t \varepsilon^\frac{N}{2}.
\]

For any $t>0$,
\begin{multline*}
J_{\mu, \nu} (tu_{\varepsilon} +v)\le\frac{t^2}{2}\int_{\RN}\abs{\nabla u_{\varepsilon}}^2+V_{\mu, \nu}\abs{u_{\varepsilon}}^2+t\int_{\RN}\nabla u_{\varepsilon}\cdot \nabla v+V_{\mu, \nu} u_{\varepsilon} v\\
-\frac{N}{2(N+\alpha)}\int_{\RN} (I_\alpha\ast \abs{t u_{\varepsilon} + v}^{\frac{N+\alpha}{N}})\abs{t u_{\varepsilon} + v}^{\frac{N+\alpha}{N}}.
\end{multline*}
Notice that $\abs{\nabla u_{\varepsilon}} \le C \varepsilon^{\frac{N}{2} + 1}\) and $\abs{\nabla v}\in L^r(\RN)$ for any $r>0$. By Lemma~\ref{lemma4}
\begin{equation*}
\left|\int_{\RN}\nabla u_{\varepsilon}\cdot \nabla v+V_{\mu, \nu}u_{\varepsilon} v\right|
\le\|\nabla u_{\varepsilon}\|_{\infty}\|\nabla v\|_{L^1}+(1+\mu)\|u_{\varepsilon}\|_{\infty}\nor{v}_1
\le \Cl{cstNq} \varepsilon^{\frac{N}{2}} \nor{v}.
\end{equation*}
Then we have 
\begin{multline*}
J_{\mu, \nu} (tu_{\varepsilon} +v)
\le\frac{c_\infty}{2}t^2-\Cl{cstGap} t^2 \varepsilon^2+\Cr{cstNq} t \varepsilon^{\frac{N}{2}} \nor{v}\\
-\frac{N}{2(N+\alpha)}\int_{\RN} (I_\alpha\ast \abs{t u_{\varepsilon} + v}^{\frac{N+\alpha}{N}})\abs{t u_{\varepsilon} + v}^{\frac{N+\alpha}{N}}.
\end{multline*}
By Lemma~\ref{norm} and by \eqref{esti}, there exists $\Cl{cstEquiv} >0$ (independent of $v$) such that
\begin{multline}
\label{ineqAsymptBeforeYoung}
J_{\mu, \nu} (tu_{\varepsilon} +v)
\le\frac{c_\infty}{2}t^2-\frac{N}{2(N+\alpha)}t^{\frac{2(N+\alpha)}{N}}- \Cr{cstGap} t^2 \varepsilon^2 -\Cr{cstEquiv}\nor{v}^{\frac{2(N+\alpha)}{N}}\\
+ \Cr{cstNq} t \varepsilon^{\frac{N}{2}} \nor{v}
+ \Cl{cstremxdxr} (t^\frac{N + \alpha}{N} \nor{v}^{\frac{\alpha}{N}} + \nor{v}^\frac{N+ 2\alpha}{N}) t \varepsilon^\frac{N}{2}.
\end{multline}
% By Young's inequality we deduce that
% \begin{multline*}%\label{asymptotic}
% J_{\mu, \nu} (tu_{\varepsilon} +v)\le\frac{c_\infty}{2}t^2-\frac{N}{2(N+\alpha)}t^{\frac{2(N+\alpha)}{N}}- \Cr{cstGap} t^2 \varepsilon^2 -\Cr{cstEquiv}\nor{v}^{\frac{2(N+\alpha)}{N}}\\
% + \Cl{cstNew} t \varepsilon^{\frac{N}{2}} (\nor{v}  + \nor{v}^\frac{N+ 2\alpha}{N}) t \varepsilon^\frac{N}{2}.
% \end{multline*}

By Young's inequality the following hold:
\begin{gather}
 \label{ineqNewYoung}\Cr{cstNq} t \varepsilon^{\frac{N}{2}} \nor{v}
 \le \tfrac{\Cr{cstEquiv}}{3}\nor{v}^\frac{2 (N + \alpha)}{N}  +
 \Cl{cstNewYoung} t^\frac{2(N + \alpha)}{N + 2 \alpha} \varepsilon^\frac{N (N + \alpha)}{N + 2 \alpha},\\
  \label{ineqNewYoungBisp}\Cr{cstremxdxr} \nor{v}^\frac{\alpha}{N} t^\frac{2N + \alpha}{N} \varepsilon^\frac{N}{2}
 \le \tfrac{\Cr{cstEquiv}}{3} \nor{v}^\frac{2 (N + \alpha)}{N}
 + \Cl{cstNewYoungBisp} t^\frac{2 (N + \alpha)}{N} \varepsilon^\frac{N (N + \alpha)}{2N + \alpha},\\
 \label{ineqNewYoungBis}
 \Cr{cstremxdxr} \nor{v}^{\frac{N + 2 \alpha}{N}} t \varepsilon^\frac{N}{2}
 \le \tfrac{\Cr{cstEquiv}}{3}\nor{v}^\frac{2 (N + \alpha)}{N}
 + \Cl{cstNewYoungBis} t^\frac{2 (N + \alpha)}{N} \varepsilon^{N + \alpha},
\end{gather}
from which we obtain
\begin{multline*}
J_{\mu, \nu} (tu_{\varepsilon} +v)\le(\tfrac{c_\infty}{2} - \Cr{cstGap} \varepsilon^2) t^2 - \bigl(\tfrac{N}{2(N+\alpha)} +\Cr{cstNewYoungBisp} \varepsilon^\frac{N (N + \alpha)}{2N + \alpha} + \Cr{cstNewYoungBis} \varepsilon^{N + \alpha}\bigr) t^{\frac{2(N+\alpha)}{N}} \\
+
 (\Cr{cstNewYoung} \varepsilon^\frac{N (N + \alpha)}{N + 2 \alpha} )t^\frac{2(N + \alpha)}{N + 2 \alpha}.
\end{multline*}
By uniform convergence, it follows that there exists \(t_* \in (0, 1)\) such that for sufficiently small \(\varepsilon>0\), we have
\[
 J_{\mu, \nu} (tu_{\varepsilon} +v)
 \le \frac{\alpha}{4(N+\alpha)}c_\infty^{\frac{N+\alpha}{\alpha}}.
\]
We can thus assume \(t \ge t_*\). Since \(\frac{2 (N + \alpha)}{N + 2 \alpha} \le 2\), we have
\begin{multline*}
J_{\mu, \nu} (tu_{\varepsilon} +v)
\le(\tfrac{c_\infty}{2} - \Cr{cstGap} \varepsilon^2) t^2 - \bigl(\tfrac{N}{2(N+\alpha)} + \Cr{cstNewYoungBisp} \varepsilon^\frac{N (N + \alpha)}{2N + \alpha} +  \Cr{cstNewYoungBis} \varepsilon^{N + \alpha}\bigr) t^{\frac{2(N+\alpha)}{N}} \\+
 \frac{\Cr{cstNewYoung}}{t_*^\frac{2 \alpha}{N + 2 \alpha}}  t^2 \varepsilon^\frac{N (N + \alpha)}{N + 2 \alpha}
\end{multline*}
and since \(\frac{N (N + \alpha)}{N + 2 \alpha} >  \frac{N (N + \alpha)}{2N + \alpha}\),  the conclusion follows provided 
\begin{equation*}
 \frac{N (N + \alpha)}{2N + \alpha} > 2.
\end{equation*}
Equivalently, we should have
\(\frac{1}{N + \alpha} \le \frac{1}{2} - \frac{1}{N}\),
which will be the case whenever \(N \ge 4\).
The proof is completed when \(N \ge 4\).

If \(N = 3\), the estimates above are still valid and show that there exist \(t_*, t^* \in (0, +\infty)\),
such that if \(t \in (0, t_*] \cup [t^*, +\infty)\) and \(v \in E^-\),
\[
 J_{\mu, \nu} (tu_{\varepsilon} +v)
 \le \frac{\alpha}{4(N+\alpha)}c_\infty^{\frac{N+\alpha}{\alpha}}.
\]
It remains thus to prove a strict inequality for \(t \in [t_*, t^*]\).
Since \(E^-\) is a set of negative eigenfunctions, there exists a constant \(\Cl{cstNegEig}\) such that
\[
 \int_{\RN} \abs{\nabla v}^2 + V_{\mu, \nu}
 \le -\Cr{cstNegEig} \nor{v}^2,
\]
and thus we have in place of \eqref{ineqAsymptBeforeYoung}, the following 
\begin{multline*}
J_{\mu, \nu} (tu_{\varepsilon} +v)
\le\frac{c_\infty}{2}t^2-\frac{N}{2(N+\alpha)}t^{\frac{2(N+\alpha)}{N}}- \Cr{cstGap} t^2 \varepsilon^2 -\Cr{cstEquiv}\nor{v}^{\frac{2(N+\alpha)}{N}} - \Cr{cstNegEig} \\
+ \Cr{cstNq} t \varepsilon^{\frac{N}{2}} \nor{v}
+ \Cr{cstremxdxr} (t^\frac{N + \alpha}{N} \nor{v}^{\frac{\alpha}{N}} + \nor{v}^\frac{N+ 2\alpha}{N})\, t \varepsilon^\frac{N}{2}.
\end{multline*}
We now use again estimates \eqref{ineqNewYoung} and \eqref{ineqNewYoungBis} and \eqref{ineqNewYoungBisp} replaced by the following 
\[
 \nor{v}^\frac{\alpha}{N} t^\frac{2N + \alpha}{N} \varepsilon^\frac{N}{2}
 \le \Cr{cstNegEig} \nor{v}^{2}
 + \Cl{cstNewYoungNegEig} t^\frac{2(2N + \alpha)}{2N - \alpha} \varepsilon^\frac{N^2}{2N - \alpha},
\]
to get for \(t_* \le t \le t^*\) the following estimate
\begin{multline*}
 J_{\mu, \nu} (tu_{\varepsilon} +v)\le(\tfrac{c_\infty}{2} - \Cr{cstGap} \varepsilon^2) t^2 - \bigl(\tfrac{N}{2(N+\alpha)} + \Cr{cstNewYoungNegEig} t^*{}^\frac{2 \alpha^2}{N (2N - \alpha)} \varepsilon^\frac{N^2}{2N - \alpha} +  \Cr{cstNewYoungBis} \varepsilon^{N + \alpha}\bigr) t^{\frac{2(N+\alpha)}{N}}\\
 +
 \frac{\Cr{cstNewYoung}}{t_*^\frac{2 \alpha}{N + 2 \alpha}}  t^2 \varepsilon^\frac{N (N + \alpha)}{N + 2 \alpha}.
\end{multline*}
For \(N = 3\) we have \(N^2/(2N - \alpha) > 2\) if and only if \(\alpha > \frac{3}{2}\) and this concludes the proof. 
\end{proof}

\subsection{Palais-Smale condition.} To obtain the existence of nontrivial solutions to \eqref{q1}, the following compactness result plays a crucial role. \begin{lemma}\label{ps}
$J_{\mu, \nu}$ satisfies the Palais-Smale condition in $(-\infty,c)$ if $c<\frac{\alpha}{2(N+\alpha)}c_\infty^{\frac{N+\alpha}{\alpha}}$. Namely, if $\{u_m\}_{m \in \mathbb{N}}\subset H^1(\RN)$ satisfies
$$
J_{\mu, \nu} (u_m)\to c,\,\, J_{\mu, \nu}'(u_m)\to 0\,\,\text{in $H^{-1} (\RN)$, as $m\to\infty$}
$$
then up to a subsequence, there exists $u\in H^1(\RN)$ such that $u_m\to u$ strongly in $H^1(\RN)$, as $m\to\infty$.
\end{lemma}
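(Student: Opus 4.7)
The plan is to follow the standard three-step strategy for a Palais-Smale condition at a subcritical level: boundedness of $\{u_m\}$ in $H^1(\RN)$, extraction of a weak limit that is a critical point of $J_{\mu,\nu}$, and a concentration-compactness dichotomy for the remainder in which the strict inequality $c < \frac{\alpha}{2(N+\alpha)} c_\infty^{(N+\alpha)/\alpha}$ rules out the only obstruction to strong convergence. For boundedness, the Pohozaev-Nehari-type identity
\[
J_{\mu,\nu}(u_m) - \tfrac{1}{2}\langle J'_{\mu,\nu}(u_m), u_m\rangle = \tfrac{\alpha}{2(N+\alpha)} G(u_m)
\]
controls $G(u_m)$, and the spectral decomposition $H^1(\RN) = E^- \oplus E^+$ from Section~\ref{sectionProofTheorem2} diagonalizes the quadratic form $Q_{\mu,\nu}(u) := \int(\abs{\nabla u}^2 + V_{\mu,\nu} u^2)$, so that $Q_{\mu,\nu}$ is coercive on $E^+$ (as $\mu < \lambda_{n+1}$) and anti-coercive on the finite-dimensional $E^-$ (as $\mu > \lambda_n$). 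Testing $J'_{\mu,\nu}(u_m)$ successively against $u_m^+$ and $u_m^-$ and invoking the HLS-type bilinear bound
\[
\Bigabs{\int_{\RN}(I_\alpha\ast \abs{u}^{\frac{N+\alpha}{N}})\abs{u}^{\frac{\alpha}{N}-1}u\, v}
\le \mathcal{C}_\alpha \nor{u}_2^{\frac{N+2\alpha}{N}} \nor{v}_2
\]
then closes the estimate and yields an $H^1$ bound on $\{u_m\}$.

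Up to a subsequence $u_m \rightharpoonup u$ in $H^1(\RN)$ and $u_m \to u$ almost everywhere. Since $(\nu^2+\abs{x}^2)^{-1}$ decays at infinity, the argument of Lemma~\ref{critical0} shows that $v \mapsto \int v^2/(\nu^2+\abs{x}^2)$ is weakly continuous on $H^1(\RN)$; combined with HLS and a.e.\ convergence for the Choquard term, passing to the limit in $J'_{\mu,\nu}(u_m) \to 0$ gives $J'_{\mu,\nu}(u) = 0$. The Nehari identity $Q_{\mu,\nu}(u) = G(u)$ then produces $J_{\mu,\nu}(u) = \frac{\alpha}{2(N+\alpha)} G(u) \ge 0$. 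Setting $v_m := u_m - u \rightharpoonup 0$, three splittings hold: the standard Brezis-Lieb decomposition of the $H^1$-norm, the vanishing $\int v_m^2/(\nu^2+\abs{x}^2) \to 0$ from weak continuity of the weight, and a nonlocal Brezis-Lieb lemma for the Choquard nonlinearity (see e.g.\ \cite{MV4}) giving $G(u_m) = G(u) + G(v_m) + o(1)$. Consequently
\[
J_{\mu,\nu}(u_m) = J_{\mu,\nu}(u) + \tfrac{1}{2}\nor{v_m}_{H^1}^2 - \tfrac{N}{2(N+\alpha)} G(v_m) + o(1),
\]
while $\langle J'_{\mu,\nu}(u_m) - J'_{\mu,\nu}(u), u_m - u\rangle = o(1)$ yields $\nor{v_m}_{H^1}^2 = G(v_m) + o(1)$.

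To conclude, let $A := \lim G(v_m)$ along a subsequence. The variational characterization of $c_\infty$ together with $\nor{v_m}_{H^1}^2 \ge \nor{v_m}_2^2$ gives $\nor{v_m}_{H^1}^2 \ge c_\infty\, G(v_m)^{N/(N+\alpha)}$, and passing to the limit forces either $A = 0$ or $A \ge c_\infty^{(N+\alpha)/\alpha}$. In the latter case
\[
c = J_{\mu,\nu}(u) + \tfrac{\alpha}{2(N+\alpha)} A \ge \tfrac{\alpha}{2(N+\alpha)} c_\infty^{(N+\alpha)/\alpha},
\]
contradicting the hypothesis; hence $A = 0$, whence $\nor{v_m}_{H^1} \to 0$ and $u_m \to u$ strongly in $H^1(\RN)$. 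The hard part is expected to be the boundedness step: for $\mu > \mu^\nu$ the quadratic form $Q_{\mu,\nu}$ is indefinite, and the $L^2$-critical scaling of the nonlinearity ($2(N+\alpha)/N$) makes the bootstrap delicate; the nonlocal Brezis-Lieb splitting, although known, also requires care in the lower-critical regime.
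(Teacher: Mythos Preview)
Your treatment of the weak limit and of the splitting/dichotomy (your steps 2 and 3) matches the paper essentially line by line: Brezis--Lieb for the $H^1$-part, weak continuity of the weighted $L^2$-term, the nonlocal Brezis--Lieb lemma for $G$, and then the alternative $A=0$ versus $A \ge c_\infty^{(N+\alpha)/\alpha}$ ruled out by the level assumption. That part is fine.

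The gap is in the boundedness step. Testing $J'_{\mu,\nu}(u_m)$ against $u_m^{\pm}$ and using your HLS bilinear bound gives at best
\[
\delta^\pm \,\|u_m^\pm\|^2 \;\le\; C\,\|u_m\|_2^{\frac{N+2\alpha}{N}}\|u_m^\pm\|_2 + o(\|u_m\|),
\]
and since the total homogeneity on the right is $\tfrac{2(N+\alpha)}{N} > 2$, this cannot be absorbed into the left-hand side; the bootstrap does not close. The linear-in-$\|u_m\|$ control on $G(u_m)$ coming from $J_{\mu,\nu}(u_m)-\tfrac12\langle J'_{\mu,\nu}(u_m),u_m\rangle$ does not rescue the argument either, because there is no lower bound on $\|u_m\|_2$ in terms of $G(u_m)$ (only the reverse inequality $\|u_m\|_2^2 \ge c_\infty\,G(u_m)^{N/(N+\alpha)}$). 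In addition, your scheme presupposes $\mu \in (\lambda_n,\lambda_{n+1})$ strictly, whereas the lemma is stated for all $\mu$ and is later applied also at the resonant endpoint $\mu=\lambda_n$ when $N\ge 4$; there $Q_{\mu,\nu}$ is merely nonpositive on $E^-$, not anti-coercive.

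The paper replaces this quantitative bootstrap by a compactness step. Assuming $\|u_m\|\to\infty$ and setting $\tilde u_m := u_m/\|u_m\|$, the first identity forces $G(\tilde u_m)\to 0$. A nonlocal Lions-type vanishing lemma (Lemma~\ref{lions}) then upgrades $G(\tilde u_m)\to 0$ to $\tilde u_m\to 0$ in $L^s(\RN)$ for all $s\in(2,2^*)$; since $(\nu^2+|x|^2)^{-1}$ vanishes at infinity, this yields $\int |\tilde u_m|^2/(\nu^2+|x|^2)\to 0$, hence $Q_{\mu,\nu}(u_m)=(1+o(1))\|u_m\|_{H^1}^2$. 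Plugging this into
\[
c+o(\|u_m\|)=J_{\mu,\nu}(u_m)-\tfrac{N}{2(N+\alpha)}\langle J'_{\mu,\nu}(u_m),u_m\rangle=\tfrac{\alpha}{2(N+\alpha)}\,Q_{\mu,\nu}(u_m)
\]
gives the contradiction. The missing ingredient in your plan is precisely this vanishing lemma: without it, smallness of $G(\tilde u_m)$ does not by itself kill the weighted $L^2$-term, and the indefinite quadratic form cannot be compared to the $H^1$-norm.
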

Before proving Lemma~\ref{ps}, let us first prove a suitable version of Lions's lemma \cite[Lemma I.1]{Lions1} in the nonlocal context.
\begin{lemma}\label{lions} Let $r>0$, $N\ge3$ and $\{u_m\}_{m \in \mathbb{N}}$ be bounded in $H^1(\RN)$, then the following are equivalent
\begin{enumerate}
 \item for some $p\in\left[\frac{N+\alpha}{N},\frac{N+\alpha}{N-2}\right)$,
 \[
\lim_{m\to \infty}\sup_{z\in\RN}\int_{B_r(z)}\int_{B_r(z)}\frac{\abs{u_m (x)}^p\abs{u_m (y)}^p}{\abs{x - y}^{N-\alpha}}\,\ud x\,\ud y=0,
 \]
 \item for some $q\in\left[2,\frac{2N}{N-2}\right)$ 
 \[
\lim_{m\to \infty}\sup_{z\in\RN}\int_{B_r(z)}|u_m|^q\,\ud x=0.
\]
\end{enumerate}
In both cases, one has for any $s\in(2,\frac{2N}{N-2})$ and $t\in(\frac{N+\alpha}{N},\frac{N+\alpha}{N-2})$
$$
\lim_{m\to \infty}\int_{\RN}|u_m|^s\,\ud x=\lim_{m\to \infty}\int_{\RN} (I_\alpha\ast|u_m|^t)|u_m|^t\,\ud x=0.
$$
\end{lemma}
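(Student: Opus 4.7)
The plan is to link the nonlocal double integral in (1) to the classical local $L^q$ concentration quantity of Lions in (2) via the Hardy--Littlewood--Sobolev inequality in one direction and via a pointwise lower bound on the Riesz kernel in the other. Once the equivalence is in place, the global consequences will follow from the classical Lions lemma combined with the global Hardy--Littlewood--Sobolev inequality.

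For \((2)\Rightarrow(1)\), I would localize Lemma~\ref{hls} to each ball \(B_r(z)\), which produces
\[
 \int_{B_r(z)}\int_{B_r(z)}\frac{\abs{u_m(x)}^p\abs{u_m(y)}^p}{\abs{x-y}^{N-\alpha}}\,\ud x\,\ud y
 \le \mathcal{C}_\alpha\Bigl(\int_{B_r(z)}\abs{u_m}^{\frac{2Np}{N+\alpha}}\Bigr)^{\frac{N+\alpha}{N}}.
\]
Since \(p\in[(N+\alpha)/N,(N+\alpha)/(N-2))\), the exponent \(2Np/(N+\alpha)\) belongs to \([2,2N/(N-2))\). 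The hypothesis (2), together with an interpolation between the uniform \(L^2\) and \(L^{2N/(N-2)}\) bounds furnished by the \(H^1\)-boundedness of \(\{u_m\}\), then provides the required local vanishing at that exponent. For \((1)\Rightarrow(2)\), the elementary lower bound \(\abs{x-y}^{\alpha-N}\ge(2r)^{\alpha-N}\) for \(x,y\in B_r(z)\) gives
\[
 (2r)^{\alpha-N}\Bigl(\int_{B_r(z)}\abs{u_m}^p\Bigr)^{2}
 \le \int_{B_r(z)}\int_{B_r(z)}\frac{\abs{u_m(x)}^p\abs{u_m(y)}^p}{\abs{x-y}^{N-\alpha}}\,\ud x\,\ud y,
\]
so (1) forces \(\sup_{z\in\RN}\int_{B_r(z)}\abs{u_m}^p\to 0\); once more interpolating this local \(L^p\) decay against the uniform local \(L^{2N/(N-2)}\) bound from Sobolev embedding produces local \(L^q\) vanishing for some \(q\in[2,2N/(N-2))\).

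For the global consequences, the classical Lions lemma applied to (2) yields \(u_m\to 0\) strongly in \(L^s(\RN)\) for every \(s\in(2,2N/(N-2))\), which handles the first limit. For the nonlocal one, the global Hardy--Littlewood--Sobolev inequality gives
\[
 \int_{\RN}(I_\alpha\ast\abs{u_m}^t)\abs{u_m}^t
 \le \mathcal{C}_\alpha\|u_m\|_{L^{2Nt/(N+\alpha)}}^{2t},
\]
and the hypothesis \(t\in((N+\alpha)/N,(N+\alpha)/(N-2))\) places \(2Nt/(N+\alpha)\) inside the open interval \((2,2N/(N-2))\), so this quantity tends to \(0\). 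The one technical nuisance I expect is the careful handling of boundary exponents such as \(p=(N+\alpha)/N\) or \(q=2\), where a direct application of Lions's lemma is borderline; this is resolved by the interpolation against the uniform \(L^2\) and \(L^{2N/(N-2)}\) bounds mentioned above, and is the only place where the nonlocal feature interacts nontrivially with the classical argument.
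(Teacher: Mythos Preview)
Your proposal is correct and, for the implication $(1)\Rightarrow(2)$, proceeds by a genuinely different and more elementary route than the paper. The paper argues by contradiction: assuming the local $L^q$ concentration function does not vanish (with $q=2Np/(N+\alpha)$), it extracts a weakly convergent translated subsequence with nonzero limit and then invokes Fatou's lemma on the double integral to reach a contradiction. You instead use the pointwise kernel bound $\abs{x-y}^{\alpha-N}\ge(2r)^{\alpha-N}$ on $B_r(z)\times B_r(z)$, which immediately factorises the double integral and yields $\sup_z\int_{B_r(z)}\abs{u_m}^p\to 0$ directly; the passage to an exponent $q\in[2,2N/(N-2))$ by H\"older/interpolation against the uniform Sobolev bound is then routine. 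Your argument avoids any compactness or weak-convergence machinery and makes the equivalence essentially a two-line computation; the paper's approach, while heavier, is closer in spirit to concentration--compactness and would generalise more readily to kernels without a uniform positive lower bound on bounded sets. For $(2)\Rightarrow(1)$ and for the global consequences both arguments coincide: localised Hardy--Littlewood--Sobolev for the former, classical Lions plus global Hardy--Littlewood--Sobolev for the latter.
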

\begin{proof} We only prove the necessary condition as the sufficient condition is trivial. Let $r>0, p\in[\frac{N+\alpha}{N},\frac{N+\alpha}{N-2})$ and $\{u_m\}_{m \in \mathbb{N}}$ be bounded in $H^1(\RN)$ satisfying
$$
\sup_{z\in\RN}\int_{B_r(z)}\int_{B_r(z)}\frac{\abs{u_m (x)}^p\abs{u_m (y)}^p}{\abs{x - y}^{N-\alpha}}\,\ud x\,\ud y\to 0,\,\,m\to \infty,
$$
we claim 
\begin{equation}\label{vanish}
\sup_{z\in\RN}\int_{B_r(z)}\abs{u_m (x)}^q\,\ud x\to 0,\,\,m\to \infty
\end{equation}
where $q=p\frac{2N}{N+\alpha}\in[2,\frac{2N}{N-2})$. Indeed, otherwise, up to a subsequence there exist $\delta>0$ and $\{z_m\}\subset\RN$ such that
$$
\int_{B_r(z_m)}\abs{u_m (x)}^q\,\ud x\to\delta,\,\,m\to \infty.
$$
Moreover, up to a subsequence, there exists $u\in H^1(\RN)\setminus\{0\}$ such that $u_m(\cdot+z_m)\to u$ weakly in $H^1(\RN)$ and almost everywhere  in $B_r(0)$ for any $m$. By the Hardy--Littlewood--Sobolev inequality,
$$
\int_{B_r(0)}\int_{B_r(0)}\frac{|u(x)|^p|u(y)|^p}{\abs{x - y}^{N-\alpha}}\,\ud x\,\ud y\in(0,\infty).
$$
Then by Fatou's lemma,
\begin{equation*}
 \begin{split}
\lim_{m\to \infty}\sup_{z\in\RN}\int_{B_r(z)}\int_{B_r(z)}&\frac{\abs{u_m (x)}^p\abs{u_m (y)}^p}{\abs{x - y}^{N-\alpha}}\,\ud x\,\ud y\\
&\ge\liminf_{m\to \infty}\int_{B_r(z_m)}\int_{B_r(z_m)}\frac{\abs{u_m (x)}^p\abs{u_m (y)}^p}{\abs{x - y}^{N-\alpha}}\,\ud x\,\ud y\\
&=\liminf_{m\to \infty}\int_{B_r(0)}\int_{B_r(0)}\frac{|u_m(x+z_m)|^p|u_m(y+x_m)|^p}{\abs{x - y}^{N-\alpha}}\,\ud x\,\ud y\\
&\ge\int_{B_r(0)}\int_{B_r(0)}\frac{|u(x)|^p|u(y)|^p}{\abs{x - y}^{N-\alpha}}\,\ud x\,\ud y>0,  
 \end{split}
\end{equation*}
which is a contradiction and \eqref{vanish} holds.

Finally, by Lions's lemma \cite[Lemma I.1]{Lions1} and the Hardy--Littlewood--Sobolev inequality, we have for any $s\in(2,\frac{2N}{N-2})$ and $t\in(\frac{N+\alpha}{N},\frac{N+\alpha}{N-2})$,
\begin{equation*}
\lim_{m\to \infty}\int_{\RN}|u_m|^s\,\ud x=\lim_{m\to \infty}\int_{\RN} (I_\alpha\ast|u_m|^t)|u_m|^t\,\ud x=0.\qedhere
\end{equation*}
\end{proof}
\begin{proof}[Proof of Lemma~\ref{ps}] Let $\{u_m\}_{m \in \mathbb{N}}\subset H^1(\RN)$ be a (P-S)$_c$ sequence, namely
$$
J_{\mu, \nu} (u_m)\to c<\frac{\alpha}{2(N+\alpha)}c_\infty^{\frac{N+\alpha}{\alpha}},\,\, J_{\mu, \nu}'(u_m)\to 0\,\,\text{in $H^{-1} (\RN)$, as $m\to\infty$}.
$$
Then
\begin{equation*}
 \begin{split}
  c+o_m(1)\|u_m\|&=J_{\mu, \nu} (u_m)-\frac{1}{2}\lan J_{\mu, \nu}'(u_m),u_m\ran\\
&=\frac{\alpha}{2(N+\alpha)}\int_{\RN} (I_\alpha\ast|u_m|^{\frac{N+\alpha}{N}})|u_m|^{\frac{N+\alpha}{N}},
 \end{split}
\end{equation*}
where $o_m(1)\to 0$, as $m\to\infty$. We first prove that the sequence $\{u_m\}_{m \in \mathbb{N}}$ is bounded in $H^1(\RN)$. Indeed, if not $\|u_m\|\to\infty$, as $m\to\infty$. Set $\Tilde{u}_m=u_m/\|u_m\|$, to have 
$$
\lim_{m\to \infty}\int_{\RN} (I_\alpha\ast|\Tilde{u}_m|^{\frac{N+\alpha}{N}})|\Tilde{u}_m|^{\frac{N+\alpha}{N}}=0.
$$
It follows from Lemma~\ref{lions} that $\Tilde{u}_m\to 0$ strongly in $L^s(\RN)$ for any $s\in(2,\frac{2N}{N-2})$. Then we have
$$
\int_{\RN}\frac{1}{\nu^2 + \abs{x}^2}|u_m|^2\,\ud x=\|u_m\|^2\int_{\RN}\frac{1}{\nu^2 + \abs{x}^2}|\Tilde{u}_m|^2\,\ud x=o_m(1)\|u_m\|^2,
$$
which implies 
\begin{equation*}
 \begin{split}
c+o_m(1)\|u_m\|&=J_{\mu, \nu} (u_m)-\frac{N}{2(N+\alpha)}\lan J_{\mu, \nu}'(u_m),u_m\ran\\
&=\frac{\alpha}{2(N+\alpha)}\int_{\RN}|\nabla u_m|^2+V_{\mu, \nu}|u_m|^2\\
&=\Bigl[\frac{\alpha}{2(N+\alpha)} +o_m(1)\Bigr]\int_{\RN}|\nabla u_m|^2+|u_m|^2.
 \end{split}
\end{equation*}
and the sequence $\{u_n\}_{n \in \mathbb{N}}$ stays bounded. 

Next we may assume there exists $u\in H^1(\RN)$ such that $u_m\rightharpoonup u$ weakly in $H^1(\RN)$ and almost everywhere  in $\RN$, as $m\to\infty$. Let $v_m=u_m-u$, then by Brezis-Lieb's lemma
\begin{equation*}
\int_{\RN}|\nabla u_m|^2+V_{\mu, \nu}|u_m|^2=\int_{\RN}\abs{\nabla u}^2+V_{\mu, \nu}\abs{u}^2+\int_{\RN}|\nabla v_m|^2+|v_m|^2+o_m(1)
\end{equation*}
and by \cite[Lemma 2.4]{MV3},
\begin{multline*}
\int_{\RN} (I_\alpha\ast|u_m|^{\frac{N+\alpha}{N}})|u_m|^{\frac{N+\alpha}{N}}\\
=\int_{\RN} (I_\alpha\ast\abs{u}^{\frac{N+\alpha}{N}})\abs{u}^{\frac{N+\alpha}{N}}
+\int_{\RN} (I_\alpha\ast|v_m|^{\frac{N+\alpha}{N}})|v_m|^{\frac{N+\alpha}{N}} +o_m(1).
\end{multline*}
Then
\begin{equation}\label{decomposition}
\left\{
\begin{aligned}
&c+o_m(1)=J_{\mu, \nu} (u)+\frac{1}{2}\int_{\RN}|\nabla v_m|^2+|v_m|^2-\frac{N}{2(N+\alpha)}\int_{\RN} (I_\alpha\ast |v_m|^{\frac{N+\alpha}{N}})|v_m|^{\frac{N+\alpha}{N}},\\
&o_m(1)=\lan J_{\mu, \nu}'(u),u\ran+\int_{\RN}|\nabla v_m|^2+|v_m|^2-\int_{\RN} (I_\alpha\ast |v_m|^{\frac{N+\alpha}{N}})|v_m|^{\frac{N+\alpha}{N}}.
\end{aligned}
\right.
\end{equation}
We have $J_{\mu, \nu}'(u)=0$ in $H^{-1} (\RN)$ and $J_{\mu, \nu} (u)\ge 0$. Suppose $\|v_m\|^2\to l\ge 0$, as $m\to\infty$, then by \eqref{decomposition}
$$
\lim_{m\to \infty}\int_{\RN} (I_\alpha\ast |v_m|^{\frac{N+\alpha}{N}})|v_m|^{\frac{N+\alpha}{N}}=l.
$$
If $l>0$, then by the definition of $c_\infty$, we have
$$
l+o_m(1)=\|v_m\|^2\ge\|v_m\|_2^2\ge c_\infty\left(l+o_m(1)\right)^{\frac{N}{N+\alpha}},
$$
which implies $l\ge c_\infty^{\frac{N+\alpha}{\alpha}}$. Then by \eqref{decomposition},
$$
c\ge\frac{\alpha}{2(N+\alpha)}c_\infty^{\frac{N+\alpha}{\alpha}},
$$
which is a contradiction. Therefore $l=0$ and the proof is complete.
\end{proof}

\begin{proof}[Proof of Theorem~\ref{Theorem 2}] Now, we are in position to prove Theorem~\ref{Theorem 2}. For this purpose, we recall the following critical point theorem due to P.~Bartolo, V.~Benci and D.~Fortunato \cite{Benci}.
\begin{lemma}\label{critical}{\rm \cite[Theorem 2.4]{Benci}}
Let $H$ be a real Hilbert space and $f\in C^1(H,\R)$ be a functional satisfying the following assumptions:
\begin{itemize}

\item [$({f_1})$] $f(-u)=f(u)$ for any $u\in H$ and $f(0)=0$;

\item [$({f_2})$] there exists $\beta>0$ such that $f$ satisfies the Palais-Smale condition in $(0,\beta)$;

\item [$({f_3})$] there exists two closed subspaces $V, W\subset H$ and positive constants $\rho,\delta$ such that

     \begin{itemize}

     \item [$({i})$] $f(u)<\beta$ for any $u\in W$;

     \item [$({ii})$] $f(u)\ge\delta$ for any $u\in V$ with $\nor{u}=\rho$;

     \item [$({iii})$] ${\rm codim} (V)<+\infty$.

     \end{itemize}

\end{itemize}
Then $f$ admits at least $m$ pairs of critical points with critical values belonging to the interval $[\delta,\beta)$ and
$$
m={\rm dim} (V\cap W)-{\rm codim} (V+W).
$$
\end{lemma}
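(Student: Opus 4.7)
The plan is to apply Benci's pseudo-index theory, which extends Krasnoselski genus minimax methods to situations in which the Palais--Smale condition holds only on a sublevel set. Let $\Sigma$ denote the family of closed symmetric subsets of $H \setminus \{0\}$ and $\gamma : \Sigma \to \mathbb{N} \cup \{\infty\}$ the Krasnoselski genus. Because $(f_2)$ only gives Palais--Smale in $(0, \beta)$, I would restrict the admissible deformations: let $\Gamma$ be the class of odd continuous maps $h : H \to H$ obtained as time-one maps of flows of odd locally Lipschitz pseudo-gradient vector fields for $f$, cut off to vanish on $\{f \geq \beta\} \cup \{f \leq 0\}$. Each such $h$ is a homeomorphism fixing $\{f \geq \beta\}$ pointwise, $(f_1)$ guarantees that odd pseudo-gradients exist, and $\Gamma$ is closed under composition.

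For every integer $k \geq 1$, define the pseudo-index and the minimax value
\[
  i^*(A) = \min_{h \in \Gamma} \gamma\bigl(h(A) \cap S_\rho \cap V\bigr), \qquad c_k = \inf\bigl\{\, \sup\nolimits_{u \in A} f(u) : A \in \Sigma,\ i^*(A) \geq k \,\bigr\}.
\]
The first task is to verify $\delta \leq c_1 \leq \cdots \leq c_m < \beta$. The lower bound $c_k \geq \delta$ follows from $(f_3)(\text{ii})$ by taking $h = \mathrm{id}$, since any admissible $A$ then meets $S_\rho \cap V$. For the strict upper bound, I would construct an explicit test set $A_\star \in \Sigma$ contained in a large symmetric neighborhood of $V \cap W$ inside $W$, with $\sup_{A_\star} f < \beta$ (using $(f_3)(\text{i})$ together with continuity of $f$) and $i^*(A_\star) \geq m$. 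The latter relies on the pseudo-index intersection inequality
\[
  \gamma\bigl(h(A_\star) \cap S_\rho \cap V\bigr) \geq \dim(V \cap W) - \mathrm{codim}(V + W) = m,
\]
which follows from Borsuk--Ulam by composing $h$ with an odd continuous projection encoding the finite-dimensional obstruction $H/(V \cap W)^{\perp}$; this is precisely the step where $\mathrm{codim}(V) < +\infty$ is essential.

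Next, I would show each $c_k$ is a critical value. If $K_{c_k} = \emptyset$, the equivariant deformation lemma (valid on the interval $(0, \beta) \ni c_k$ by $(f_1)$ and $(f_2)$) produces $h \in \Gamma$ and $\varepsilon > 0$ with $h(\{f \leq c_k + \varepsilon\}) \subset \{f \leq c_k - \varepsilon\}$. Since $i^*$ is invariant under $\Gamma$ by construction, any near-optimal $A$ with $i^*(A) \geq k$ and $\sup_A f \leq c_k + \varepsilon$ yields $h(A)$ with the same pseudo-index but smaller supremum, contradicting the definition of $c_k$. Finally, multiplicity comes from coincidence: if $c = c_k = c_{k+1} = \cdots = c_{k+j-1}$, a refined version of the same deformation argument (excising an $\varepsilon$-neighborhood of $K_c$ from $A$ and using the subadditivity $\gamma(A \setminus B) \geq \gamma(A) - \gamma(B)$) forces $\gamma(K_c) \geq j$, so $K_c$ contains at least $j$ antipodal pairs of critical points at level $c$; summing over the distinct values of the $c_k$ in $[\delta, \beta)$ produces at least $m$ pairs in total.

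The main obstacle is the construction of $A_\star$ together with the verification of $i^*(A_\star) \geq m$: the combinatorial identity $m = \dim(V \cap W) - \mathrm{codim}(V+W)$ must be realized topologically, which requires a careful choice of finite-dimensional projections and a Borsuk--Ulam type nonvanishing argument to ensure that every admissible deformation of $A_\star$ still meets $S_\rho \cap V$ in a set of genus at least $m$. A secondary technical point is to ensure the deformation class $\Gamma$ is rich enough to lower sublevels while remaining compatible with the restricted Palais--Smale range in $(f_2)$; the cutoff near $\{f \geq \beta\}$ handles this, but one must verify that the resulting vector fields are globally Lipschitz and odd so that the flow stays within $\Gamma$ under composition.
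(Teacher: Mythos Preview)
The paper does not prove this lemma: it is quoted verbatim as \cite[Theorem~2.4]{Benci} and used as a black box in the proof of Theorem~\ref{Theorem 2}, so there is no ``paper's own proof'' to compare against. Your outline follows the pseudo-index strategy of Bartolo--Benci--Fortunato and is, at the level of a sketch, the correct route: define a restricted deformation class compatible with the partial Palais--Smale range, set up minimax levels via the pseudo-index $i^*$, sandwich them in $[\delta,\beta)$ using an explicit test set in $W$ together with the intersection/dimension inequality, and then run the standard equivariant deformation plus genus-subadditivity argument for multiplicity. That is exactly how the cited theorem is proved, so your proposal is aligned with the source, not divergent from it.

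One caution on the point you flag yourself: the intersection inequality $\gamma(h(A_\star)\cap S_\rho\cap V)\ge \dim(V\cap W)-\operatorname{codim}(V+W)$ is the heart of the matter, and in Benci's original argument it is obtained not by an ad hoc Borsuk--Ulam construction but from general properties of the pseudo-index (monotonicity, subadditivity, and its behaviour on finite-dimensional spheres) combined with the fact that elements of $\Gamma$ fix the set $\{f\ge\beta\}$, which contains the large sphere in $W$ used as $A_\star$. If you write this up in full you should state and prove those pseudo-index axioms rather than invoke a single projection; otherwise the step ``composing $h$ with an odd continuous projection'' is where a referee would ask for details.
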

Let us divide the proof of Theorem \ref{Theorem 2} into two steps: 
\medbreak
\textbf{Step 1.} We use Lemma~\ref{critical} to show that \eqref{q1} admits at least one nontrivial solution for $\mu\in[\la_n,\la_{n+1})$. Obviously, $J_{\mu, \nu} (-u)=J_{\mu, \nu} (u)$ for any $u\in H^1(\RN)$ and $J_{\mu, \nu} (0)=0$. By Lemma~\ref{ps}, $J_{\mu, \nu}$ satisfies the Palais-Smale condition in $(0,\beta)$ with $\beta=\frac{\alpha}{2(N+\alpha)}c_\infty^{\frac{N+\alpha}{\alpha}}$. Take $V=E^+$ and
$$
W=\{w\in H^1(\RN): w=tu_{\varepsilon} +v,\,\,t\in\R,\,\,v\in E^-\},
$$
then $V+W=H^1(\RN),\,\,{\rm codim} (V)=n<+\infty.$ By Lemma~\ref{lemma4}, for $\e$ small enough, $\int_{\RN}\abs{\nabla u_{\varepsilon}} +V_{\mu, \nu}\abs{u_{\varepsilon}}^2>0$, which implies $u_{\varepsilon}\not\in E^-$. Then ${\rm dim} (V\cap W)=1,\,\,m=1$. Noting that $J_{\mu, \nu}$ is even in $H^1(\RN)$, by Lemma~\ref{estimate}, for $\varepsilon > 0$ small, we have
$$
\sup_{w\in W}J_{\mu, \nu} (w)<\frac{\alpha}{2(N+\alpha)}c_\infty^{\frac{N+\alpha}{\alpha}}.
$$
On the other hand, observe that for any $u\in E^+$,
$$
\int_{\RN}\abs{\nabla u}^2+ \abs{u}^2\ge\la_{n+1}\int_{\RN}\frac{1}{\nu^2 + \abs{x}^2}\abs{u}^2\,\ud x.
$$
By the Hardy--Littlewood--Sobolev inequality, for any $u\in E^+$, we get
\begin{equation*}
\begin{split}
J_{\mu, \nu} (u)&\ge\frac{1}{2}\int_{\RN}\abs{\nabla u}^2+V_{\mu, \nu}\abs{u}^2-\mathcal{C}_\alpha\nor{u}_2^{\frac{2(N+\alpha)}{N}}\\
&\ge\frac{1}{2}\left(1-\frac{\mu}{\la_{n+1}}\right)\int_{\RN}\abs{\nabla u}^2+ \abs{u}^2-\mathcal{C}_\alpha\nor{u}_2^{\frac{2(N+\alpha)}{N}}\\
&=\nor{u}^2\left[\frac{1}{2}\left(1-\frac{\mu}{\la_{n+1}}\right)-\mathcal{C}_\alpha\nor{u}_2^{\frac{2\alpha}{N}}\right]\\
&\ge\frac{1}{4}\left(1-\frac{\mu}{\la_{n+1}}\right)\rho^2,\,\,\text{for $\nor{u}=\rho$ sufficiently small.}
\end{split}
\end{equation*}
As a consequence of Lemma~\ref{critical}, \eqref{q1} admits at least one nontrivial solution $u\in H^1(\RN)$ with $J_{\mu, \nu} (u)<\frac{\alpha}{2(N+\alpha)}c_\infty^{\frac{N+\alpha}{\alpha}}$.
\medbreak
\textbf{Step 2.} In the following, we prove the existence of ground state solutions to \eqref{q1}. Let
$$
K:=\{u\in H^1(\RN)\setminus\{0\}: J_{\mu, \nu}'(u)=0\,\,\text{in $H^{-1} (\RN)$}\},
$$
then by \textbf{Step 1}, $K\not=\emptyset$ and
$$
m_{\mu, \nu}:=\inf_{u\in K}J_{\mu, \nu} (u)<\frac{\alpha}{2(N+\alpha)}c_\infty^{\frac{N+\alpha}{\alpha}}.
$$
Obviously, $m_{\mu, \nu}\ge 0$ and there exists a sequence $\{u_m\}_{m \in \mathbb{N}}\) in $K$ such that $J_{\mu, \nu}'(u_m)=0$ in $H^{-1} (\RN)$ and $J_{\mu, \nu} (u_m)\to m_{\mu, \nu}$ as $m\to\infty$. By Lemma~\ref{ps}, up to a subsequence, there exists $u_0\in H^1(\RN)$ such that $u_m\to u_0$ strongly in $H^1(\RN)$ as $m\to\infty$. Then $u_0\in K\cup\{0\}$ and $J_{\mu, \nu} (u_0)=m_{\mu, \nu}$.

To conclude the proof, it remains to show that $m_{\mu, \nu}>0$, indeed if not, then
$$
\lim_{m\to \infty}\int_{\RN} (I_\alpha\ast |u_m|^{\frac{N+\alpha}{N}})|u_m|^{\frac{N+\alpha}{N}}=0.
$$
Similarly as above, the sequence $\{u_m\}_{m \in \mathbb{N}}$ is bounded in $H^1(\RN)$ and by virtue of Lemma~\ref{lions},
$$
\lim_{m\to \infty}\int_{\RN}\frac{1}{\nu^2 + \abs{x}^2}|u_m|^2\,\ud x=0
$$
and then $\|u_m\|\to 0$, as $m\to\infty$. Observe that $u_m=\bar{u}_m+v_m$ and $\|u_m\|^2=\|\bar{u}_m\|^2+\|v_m\|^2$, where $\bar{u}_m\in E^-$ and $v_m\in E^+$. Then $\|\bar{u}_m\|\to 0$ and $\|v_m\|\to 0$, as $m\to\infty$. If $v_m=0$, then by $u_m\not=0$, we have $\bar{u}_m\not=0$ and
\begin{equation*}
\begin{split}
J_{\mu, \nu} (u_m)&=J_{\mu, \nu} (\bar{u}_m)=\frac{1}{2}\int_{\RN}|\nabla \bar{u}_m|^2+V_{\mu, \nu}|\bar{u}_m|^2-\int_{\RN} (I_\alpha\ast |\bar{u}_m|^{\frac{N+\alpha}{N}})|\bar{u}_m|^{\frac{N+\alpha}{N}}\\
&\le\frac{1}{2}\left(1-\frac{\mu}{\la_n}\right)\int_{\RN}|\nabla \bar{u}_m|^2+|\bar{u}_m|^2-\int_{\RN} (I_\alpha\ast |\bar{u}_m|^{\frac{N+\alpha}{N}})|\bar{u}_m|^{\frac{N+\alpha}{N}}\\
&<0,
\end{split}
\end{equation*}
which contradicts the fact that
\begin{equation*}
 \begin{split}
  J_{\mu, \nu} (u_m)&=J_{\mu, \nu} (u_m)-\frac{1}{2}\lan J_{\mu, \nu}'(u_m),u_m\ran\\
&=\frac{\alpha}{2(N+\alpha)}\int_{\RN} (I_\alpha\ast|u_m|^{\frac{N+\alpha}{N}})|u_m|^{\frac{N+\alpha}{N}}>0.
 \end{split}
\end{equation*}
So we get $v_m\not=0$ for any $m$.
\medbreak
\textbf{Case 1.} Assume that up to a subsequence, $\lim\limits_{m\to \infty}\frac{\|\bar{u}_m\|}{\|v_m\|}<+\infty$, then $\|\bar{u}_m\|\le C\|v_m\|$ for any $m$. By $J_{\mu, \nu}'(u_m)=0$ in $H^{-1} (\RN)$ and $v_m\in E^+$, we have
\begin{equation}\label{bound}
\begin{split}
\int_{\RN} (I_\alpha\ast|u_m|^{\frac{N+\alpha}{N}})|u_m|^{\frac{\alpha-N}{N}}u_mv_m&=\int_{\RN}|\nabla v_m|^2+V_{\mu, \nu}|v_m|^2\\
&\ge\left(1-\frac{\mu}{\la_{n+1}}\right)\int_{\RN}|\nabla v_m|^2+|v_m|^2.
\end{split}
\end{equation}
By the Hardy--Littlewood--Sobolev inequality and H\"older's inequality,
\begin{equation*}
 \begin{split}
  \int_{\RN} (I_\alpha\ast|u_m|^{\frac{N+\alpha}{N}})|u_m|^{\frac{\alpha-N}{N}}u_mv_m
&\le\mathcal{C}_\alpha\left(\int_{\RN}|u_m|^2\right)^{\frac{N+\alpha}{2N}}\left(\int_{\RN}|u_m|^{\frac{2\alpha}{N+\alpha}}|v_m|^{\frac{2N}{N+\alpha}}\right)^{\frac{N+\alpha}{2N}}\\
&\le\mathcal{C}_\alpha\|u_m\|_2^{\frac{N+2\alpha}{N}}\|v_m\|_2\le c\|v_m\|^{\frac{2(N+\alpha)}{N}},
 \end{split}
\end{equation*}
where $c>0$ (independent of $m$). By \eqref{bound},
$$
\left(1-\frac{\mu}{\la_{n+1}}\right)\|v_m\|^2\le c\|v_m\|^{\frac{2(N+\alpha)}{N}},
$$
which is a contradiction, since $\mu<\la_{n+1}$ and $\|v_m\|\to 0$, as $m\to\infty$. Thus, $m_{\mu, \nu}>0$.
\medbreak
\textbf{Case 2.} Assume that, up to a subsequence, $\lim\limits_{m\to \infty}\frac{\|\bar{u}_m\|}{\|v_m\|}=\infty$, then $\bar{u}_m\not=0$ for $m$ large and $\lim\limits_{m\to \infty}\frac{\|v_m\|}{\|\bar{u}_m\|}=0$. By $J_{\mu, \nu}'(u_m)=0$ in $H^{-1} (\RN)$, we have
\begin{equation}
\label{bound1}
\begin{split}
 \int_{\RN}|\nabla \bar{u}_m|^2&+V_{\mu, \nu}|\bar{u}_m|^2\\
 &=\int_{\RN} (I_\alpha\ast|u_m|^{\frac{N+\alpha}{N}})|u_m|^{\frac{\alpha-N}{N}}u_m\bar{u}_m\\
 &=\int_{\RN} (I_\alpha\ast|u_m|^{\frac{N+\alpha}{N}})|u_m|^{\frac{N+\alpha}{N}}-
\int_{\RN} (I_\alpha\ast|u_m|^{\frac{N+\alpha}{N}})|u_m|^{\frac{\alpha-N}{N}}u_mv_m.
\end{split}
\end{equation}
By Lemma~\ref{norm}, we have $\left|\|u_m\|_\ast-\|\bar{u}_m\|_\ast\right|\le\|v_m\|_\ast$
and then by $\|v_m\|=o(\|\bar{u}_m\|)$,
$$
\int_{\RN} (I_\alpha\ast|u_m|^{\frac{N+\alpha}{N}})|u_m|^{\frac{N+\alpha}{N}}=\|\bar{u}_m\|_\ast^{\frac{2(N+\alpha)}{N}} (1+o_m(1)).
$$
At the same time, similarly as above, for some $c>0$ we have 
$$
\int_{\RN} (I_\alpha\ast|u_m|^{\frac{N+\alpha}{N}})|u_m|^{\frac{\alpha}{N}}|v_m|\le c\|u_m\|^{\frac{N+2\alpha}{N}}\|v_m\|=o(\|\bar{u}_m\|_\ast^{\frac{2(N+\alpha)}{N}}),
$$
where we used the fact that norms in $E^-$ are equivalent. Then by \eqref{bound1}, for $m$ large enough,
$$
\int_{\RN}|\nabla \bar{u}_m|^2+V_{\mu, \nu}|\bar{u}_m|^2=\|\bar{u}_m\|_\ast^{\frac{2(N+\alpha)}{N}} (1+o_m(1))>0,
$$
which contradicts the fact that
$$
\int_{\RN}|\nabla \bar{u}_m|^2+V_{\mu, \nu}|\bar{u}_m|^2\le0,\,\,\text{since $\bar{u}_m\in E^-$ and $\mu\ge\la_n$}.
$$
Thus $m_{\mu, \nu}>0$ and the proof of Theorem \ref{Theorem 2} is now complete.
\end{proof}

%%%%%%%%%%%%%%%%%%%%%%%%%%%%%%%%%%%%%%%%%%%%%%%%%%%%%%%%%%%%%%%%%%%%%%%%%
%%%%%%%%%%%%%%%%%%%%%%%%%%%%%%%%%%%%%%%%%%%%%%%%%%%%%%%%%%%%%%%%%%%%%%%%%
%%%%%%%%%%%%%%%%%%%%%%%%%%%%%%%%%%%%%%%%%%%%%%%%%%%%%%%%%%%%%%%%%%%%%%%%%
%%%%%%%%%%%%%%%%%%%%%%%%%%%%%%%%%%%%%%%%%%%%%%%%%%%%%%%%%%%%%%%%%%%%%%%%%
%%%%%%%%%%%%%%%%%%%%%%%%%%%%%%%%%%%%%%%%%%%%%%%%%%%%%%%%%%%%%%%%%%%%%%%%%
%%%%%%%%%%%%%%%%%%%%%%%%%%%%%%%%%%%%%%%%%%%%%%%%%%%%%%%%%%%%%%%%%%%%%%%%%

\section{Proof of Theorem~\ref{Theorem 3}.}

Finally we establish an upper bound for the value $\mu^{\nu}$.

\begin{proof}[Proof of Theorem~\ref{Theorem 3}]
For any $p>\max\{2,N/4\}$, let
$$
u_p(x)=\frac{\nu^{2p}}{(\nu^2 + \abs{x}^2)^p},\,\,x\in\RN,
$$
then
$$
|\nabla u_p(x)|=\frac{2p\nu^{2 p}\abs{x}}{(\nu^2 + \abs{x}^2)^{p+1}},\,\,x\in\RN,
$$
and by the change of variables \(r = \nu \sqrt s\), we get 
%\begin{align*}
%\int_{\RN}|\nabla u_p|^2+|u_p|^2&=C_N\left[4p^2 \int_0^\infty\frac{\nu^{4 p} r^{N+1}}{(\nu^2 + r^2)^{2p+2}}\,\ud r+ \int_0^\infty\frac{\nu^{4 p} r^{N-1}}{(\nu^2 + r^2)^{2p}}\,\ud r\right]\\
%&=C_N \nu^{N} \left[4p^2\int_0^\infty\frac{s^{N/2}}{(1+s^2)^{2p+2}}\,\ud s+\int_0^\infty\frac{s^{N/2-1}}{(1+s^2)^{2p}}\,\ud s\right],
%\end{align*}
\begin{equation*}
\begin{split}
 \int_{\RN}|\nabla u_p|^2+|u_p|^2&=C_N\left[4p^2 \int_0^\infty\frac{\nu^{4 p} r^{N+1}}{(\nu^2 + r^2)^{2p+2}}\,\ud r+ \int_0^\infty\frac{\nu^{4 p} r^{N-1}}{(\nu^2 + r^2)^{2p}}\,\ud r\right]\\
&=\frac{1}{2}C_N \left[4p^2\nu^{N-2}\int_0^\infty\frac{s^{N/2}}{(1+s)^{2p+2}}\,\ud s+\nu^N\int_0^\infty\frac{s^{N/2-1}}{(1+s)^{2p}}\,\ud s\right]
\end{split}
\end{equation*}
and 
\begin{equation*}
\begin{split}
\int_{\RN}\frac{|u_p|^2}{\nu^2 + \abs{x}^2}\,\ud x&=C_N \int_0^\infty\frac{\nu^{4p}r^{N-1}}{(\nu^2 + r^2)^{2p+1}}\,\ud r=\frac{1}{2}C_N\nu^{N - 2} \int_0^\infty\frac{s^{N/2-1}}{(1+s)^{2p+1}}\,\ud s.
\end{split}
\end{equation*}
By the definition of the Beta function, for any $x,y>0$,
$$
B(x,y)=\int_0^\infty\frac{t^{x-1}}{(1+t)^{x+y}}\,\ud t,\,\,B(x,y)=\frac{\Gamma(x)\Gamma(y)}{\Gamma(x+y)}.
$$
Recalling that $4p>N$,
\begin{equation*}
\begin{split}
\frac{\int_{\RN}|\nabla u_p|^2+|u_p|^2}{\int_{\RN}\frac{|u_p|^2}{\nu^2 + \abs{x}^2}\,\ud x}&=\frac{4 p^2 B(\frac{N}{2} +1,2p+1-\frac{N}{2})+\nu^2 B(\frac{N}{2},2p-\frac{N}{2})}{B(\frac{N}{2},2p+1-\frac{N}{2})}\\
&=\frac{2Np^2}{2p+1} +\frac{4\nu^2 p}{4p-N}.
\end{split}
\end{equation*}
It follows that
$$
\mu^{\nu}\le\min_{p>N/4}\left(\frac{2Np^2}{2p+1} +\frac{4p\nu^2}{4p-N}\right).
$$
In particular, if we take \(p = \frac{N}{4} + 1\), we obtain
\[
 \mu^{\nu}\le \frac{N (N + 4)^2}{4(N + 6)} + \frac{(N + 4)\nu^2}{4},
\]
which together with \eqref{lower_est_asym} yields 
\[
\lim_{N\to \infty}\frac{\mu^{\nu}}{\frac{N^2(N-2)}{4(N+1)}}=1.
\]

\end{proof}

\end{document}